\documentclass{article}%
\usepackage{amsmath, amsthm}
\usepackage{amsfonts}
\usepackage{amssymb}
\usepackage{enumerate}
\usepackage{graphicx}
\usepackage[table]{xcolor}
\usepackage[all,arc,curve,color,frame]{xy} 

\usepackage{amsmath}
\usepackage{amsfonts}
\usepackage{amssymb}
\usepackage{graphicx}%
\usepackage{color}
\usepackage{amsmath}
\usepackage{amsfonts}
\usepackage{amssymb}
\usepackage{graphicx}%
\usepackage{xypic}
\usepackage{tikz-cd}
\usepackage{color}
\usepackage{mathbbol}
\providecommand{\U}[1]{\protect\rule{.1in}{.1in}}
\newtheorem{theorem}{Theorem}

\newtheorem{corollary}[theorem]{Corollary}

\newtheorem{definition}[theorem]{Definition}
\newtheorem{example}[theorem]{Example}

\newtheorem{lemma}[theorem]{Lemma}

\newtheorem{proposition}[theorem]{Proposition}
\newtheorem{remark}[theorem]{Remark}

\tikzset{
  symbol/.style={
    draw=none,
    every to/.append style={
      edge node={node [sloped, allow upside down, auto=false]{$#1$}}}
  }
}

\title{The MF property for amalgamated free products}

\author{Tatiana Shulman}

\begin{document}

\maketitle

\begin{abstract}
A C*-algebra (or a group) is called MF (matricial field) if it admits finite-dimensional approximate unitary representations which are approximately injective, where approximately is understood with respect to the operator norm.

It is proved that for any MF C*-algebra $A$ and its C*-subalgebra $C$, $A\ast_C A$ is MF. For general amalgamated free products, $A\ast_C B$, a necessary and sufficient condition for being MF is given. It is shown that the following groups -- amalgamated free products of amenable groups, semidirect products of amenable groups by free groups, and $\mathbb Z^2\rtimes SL_2(\mathbb Z)$ -- all have MF full group C*-algebra.  It is  shown that the class of MF C*-algebras is closed under maximal tensor products with $C^*(\mathbb F_n)$.
\end{abstract}

\section{Introduction}
Approximation of arbitrary objects by well-understood ones is a recurring theme in almost any area of mathematics.  Approximation properties of C*-algebras lie at the heart of operator algebra theory.
MF C*-algebras are the ones that can be approximated, in an appropriate sense, by finite-dimensional ones. The MF property is closely related to several other important properties: it is implied by quasidiagonality and, in turn, implies stable finiteness.  It has an interesting connection with the Ext semigroup of Brown, Douglas and Fillmore (see e.g. \cite{HTh}).

The class of MF C*-algebras contains numerous important examples, and several constructions are known to preserve the MF property.
In particular Hadwin, Li and Shen proved that free products of MF C*-algebras are MF \cite{HadwinLiShen}. Later Li and Shen  discovered that if $A$ is a  separable  MF C*-algebra and $F$ is its  finite-dimensional C*-subalgebra, then $A\ast_F A$ is MF \cite{LiShen}.

  We prove here that, surprisingly, the finite-dimensionality assumption can be dropped entirely: the amalgamating C*-subalgebra can be arbitrary.  

\medskip

{\bf Theorem A} (Th. 10) {\it  Let $A$ be a separable  MF C*-algebra and let $C$ be any C*-subalgebra of $A$. Then $A\ast_C A$ is MF.  }

\medskip

For amalgamated free products $A\ast_F B$, where $A$ and $B$ are separable C*-algebras and $F$ is their common  finite-dimensional C*-subalgebra, Li and Shen found a necessary and sufficient condition  to be MF \cite{LiShen}: there are embeddings of $A$ and $B$ into $\prod M_n/\oplus M_n$ that are compatible on $F$. While this condition is obviously necessary for the amalgamated free product to be $MF$,  the sufficiency direction is highly nontrivial and constitutes the main part of Li and Shen's proof. 

 Our second main result  shows that the Li--Shen characterization remains valid for arbitrary amalgamated free products, with no finite-dimensionality assumption on the amalgamating subalgebra. Thus, we obtain   a complete characterization of when an amalgamated free product of separable C*-algebras is  MF.

\medskip

{\bf Theorem B} (Th. 20) {\it  Let $A, B, C$ be separable C*-algebras and $\theta_A: C \to A$, $\theta_B: C \to B$ be inclusions. Then $A\ast_C B$ is MF if and only if there exist embeddings $\phi_A: A \to \prod M_n /\oplus M_n$ and $\phi_B: B\to  \prod M_n /\oplus M_n$ such that $\phi_A\circ \theta_A = \phi_B\circ \theta_B$.  }

\medskip

We apply our results to investigate  MF property of full group C*-algebras and of groups themselves.

MF groups were introduced in \cite{CDE} as a group-theoretic analogue of  MF  C*-algebras. A question attributed to Kirchberg askes whether every discrete group is MF.\footnote{It is hard to trace the origin of this question. In \cite{BlackadarKirchberg} Blackadar and Kirchberg asked whether $C^*_r(G)$ is MF, for every discrete group $G$ but it is not the same as $G$ being MF, at least apriori.} This question is in the same spirit as other approximation problems for groups, such as the sofic conjecture and the Connes Embedding conjecture for groups. All three problems -- the sofic conjecture, the Connes Embedding conjecture for groups, and Kirchberg's MF-question -- state that every discrete group can be approximated by "nice" ones, namely by the permutation groups $Sym(n)$ and by the unitary groups $\mathcal U(n)$ of $n\times n$ matrices.



Relatively little is known about MF groups. Amenable groups -- which had long been known to be sofic and hyperlinear -- were shown to be MF only relatively recently in the breakthrough paper \cite{TWW}.

Since a discrete group $G$ embeds into $C^*(G)$ and $C^*_r(G)$, there is the following connection between the C*-algebraic MF property and the group theoretic one: if either $C^*(G)$ or $C^*_r(G)$ is MF, so is $G$.


The main source of groups with MF full C*-algebras is the class of amenable groups and  groups with RFD full group C*-algebras (see e.g. \cite{LS}, \cite{ShulmanSkalski} and \cite{RFDamalg} for examples of such). In the reduced case, important non-amenable examples include $C_r^*(\mathbb F_2)$, by the theorem of Haagerup and Thorbjornsen \cite{HTh}, for more recent  examples see e.g. \cite{SchafhauserAmalgamated}.



\medskip

An immediate consequence of Theorem A is  that if $C^*(G)$ is MF, then for any double $G\ast_H G$ of $G$, $C^*(G\ast_H G)$ is MF.
The next results are obtained using Theorem B.

\medskip

{\bf Theorem} (Th. 23) {\it If $G_1$ and $G_2$ are amenable groups and $H$ is their common subgroup, then $C^*(G_1\ast_H G_2)$ is MF.}

\medskip

In \cite{RSch} Rainone and Schafhauser proved that if $ G$ is amenable, then for any action of $\mathbb F_n$ on $G$, $C^*_r(G\rtimes \mathbb F_n)$ is MF.
Here we use our results on amalgamated free products to obtain a similar result for full group C*-algebras.

\medskip

{\bf Theorem} (Cor. 35) {\it For any semidirect product $G\rtimes  \mathbb F_n$ of an amenable group  $G$ by a free group $ \mathbb F_n$,  $C^*(G\rtimes \mathbb F_n)$ is MF.}

\medskip

\noindent We also prove that $C^*(\mathbb Z^2\rtimes SL_2(\mathbb Z))$ is MF  ({\bf Cor. 34}).


The same technique that we used for amalgamated free products applies to prove  that the class of MF C*-algebras is closed   under maximal tensor products with $C^*(\mathbb F_n)$ ({\bf Th. 28}) and under central HNN-extensions ({\bf Th. 29}).

We also obtain a result that provides new examples of groups whose full C*-algebras are residually finite-dimensional (RFD). Recall that RFD property  is stronger than MF. We prove that if  $G_1, G_2$ are amenable residually finite groups  and $H$ is a finite subgroup of both, then $C^*(G_1\ast_H G_2)$ is RFD ({\bf Cor. 23}).

\medskip

The key tool used for proving our main  results is the lifting characterization of the MF property established by the author in \cite{homotopyLifting}. This characterization allows us to reduce the MF problem for amalgamated free products to an appropriate lifting problem.

\bigskip

\noindent {\bf Acknowledgments. } The author is grateful to Don Hadwin for a useful discussion.
  The author is partially supported by a grant from the Swedish Research Council.

\section{Preliminaries}

\subsection{Amalgamated free products}

\begin{definition} Let $A, B, C$ be (unital) C*-algebras with (unital) embeddings $\theta_A: C \to A$ and $\theta_B: C \to B$. The (unital) amalgamated free product of $A$ and $B$ over $C$ is the C*-algebra $D$, equipped with (unital) embeddings $i_A: A \to D$ and $i_B: B\to D$ satisfying $i_A\circ \theta_A = i_B\circ \theta_B$, such that $D$ is generated by $i_A(A)\bigcup i_B(B)$ and satisfies the following universal property:

whenever $\mathcal E$ is a (unital) C*-algebra and $\pi_A: A \to \mathcal E$ and $\pi_B: B \to \mathcal E$ are (unital) $\ast$-homomorphisms satisfying $\pi_A\circ\theta_A = \pi_B\circ \theta_B$, there is  a (unital)  $\ast$-homomorphism $\pi: D \to \mathcal E$ such that $\pi\circ i_A = \pi_A$ and $\pi\circ i_B = \pi_B$.
\end{definition}

When $A=B$ we will use notation $i_1, i_2$ instead of $i_A, i_B$. 

The (unital) amalgamated free product $C$ is usually denoted by $A\ast_C B$.
Sometimes, when it is important to emphasize how $C$ is embedded into $A$ and $B$ respectively, we will write $A\ast_{\theta_A, \theta_B} B$   rather than $A\ast_C B$. When we write $A\ast_C A$ we always mean that $C$ is embedded into both copies of $A$ identically.

The amalgamated free product of more than two factors is defined analogously.

C*-algebraic definition of amalgamated free product agrees with the group-theoretic definition of amalgamated free product in the following sense: $$C^*(G_1\ast_{H} G_2) = C^*(G_1)\ast_{C^*(H)} C^*(G_2).$$


\subsection{MF C*-algebras and groups}
We use notation $M_n$ for the C*-algebra of all $n\times n$ matrices.
Let $$\prod M_n = \{(T_n)_{n\in \mathbb N}\;|\; T_n \in M_n, \;\sup_n \|T_n\|<\infty\},$$
$$\bigoplus M_n = \{(T_n)_{n\in \mathbb N}\;|\; \lim_{n\to \infty} \|T_n\| = 0\}.$$


\begin{definition} (\cite{BlackadarKirchberg}) A C*-algebra is {\it matricial field (MF)} if it embeds into  $\prod M_n/\oplus M_n$.
\end{definition}



 Equivalently, $A$ is MF if there exist maps $\phi_n: A \to M_{k_n}$, for some $k_n\in \mathbb N$, which are approximately multiplicative, approximately linear, approximately self-adjoint, and approximately injective. Reformulating this "locally", $A$ is MF if for any finite $F\subset A$ and $\epsilon >0$ there is $k$ and a map $\phi: A \to M_k$ such that
\begin{multline*}\|\phi_(a)\|>\|a\|-\epsilon, \;\|\phi(ab)- \phi(a)\phi(b)\|\le \epsilon,\\
\|\phi(a+b)- \phi(a)-\phi(b)\|\le \epsilon, \;\|\phi(a^*)- \phi(a)^*\|\le \epsilon,\end{multline*} for any $a, b \in F$.

\begin{definition} (\cite{CDE}) A group is {\it MF} if it embeds into the unitary group $\mathcal U \left(\prod M_n/\oplus M_n\right)$ of $\prod M_n/\oplus M_n$.\footnote{A different, more strong, property of a group was also called MF in \cite{SchafhauserAmalgamated}, see also \cite{de la Salle}.}
\end{definition}

A discrete group $G$ embeds into $C^*(G)$ and $C^*_r(G)$ and therefore, if either $C^*(G)$ or $C^*_r(G)$ is MF, so is $G$.

\subsection{Asymptotic homomorphisms}

{\bf Definition} (\cite{ConnesHigson}). An {\it asymptotic homomorphism} from $A$ to $B$ is a family of maps $(f_{\lambda})_{\lambda\in [0,\infty)}: A \to B$ satisfying the following properties:

\medskip

(i) for any $a\in A$, the mapping $[0, \infty) \to B$ defined by the rule $\lambda \to f_{\lambda}(a)$  is continuous;

(ii) for any $a, b\in A$ and $\mu_1, \mu_2\in \mathbb C$, we have

\begin{itemize}
\item  $\lim_{\lambda\to \infty} \|f_{\lambda}(a^*) - f_{\lambda}(a)^*\| = 0;$
\item  $\lim_{\lambda\to \infty} \|f_{\lambda}(\mu_1a + \mu_2b) - \mu_1f_{\lambda}(a) - \mu_2f_{\lambda}(b)\| = 0;$
\item $\lim{\lambda\to \infty}  \|f_{\lambda}(ab) - f_{\lambda}(a)f_{\lambda}(b)\| = 0.$
\end{itemize}

We will call $(f_{\lambda})_{\lambda\in \Lambda}: A \to B$, where $\Lambda$ is a directed set,   a {\it discrete asymptotic homomorphism} if the condition (ii) above is satisfied and for each $a\in A$ one has $\sup_{\lambda} \|f_{\lambda}(a)\|< \infty$. (For usual asymptotic homomorphisms the last condition holds automatically, see \cite{ConnesHigson} or \cite{Dadarlat2}). In this paper discrete asymptotic homomorphisms will be indexed by $\Lambda = \mathbb N$.

\medskip

 Let $q: B \to B/I$ be a quotient map and $f: A \to B/I$ a $\ast$-homomorphism. Following Manuilov and Thomsen (\cite{MT15}, \cite{Manuilov}), we say that $f$ {\it lifts to a discrete asymptotic homomorphism} $\phi_n$, $n\in \mathbb N$, if $q\circ \phi_n = f$, for each $n\in \mathbb N$.
 
 We say that $f$ {\it lifts asymptotically to a discrete asymptotic homomorphism} $\phi_n$, $n\in \mathbb N$, if $\lim_{n\to \infty} \| q\circ \phi_n(a)- f(a)\|=0$, for each $a\in A$.

\section{Amalgamated free products $A\ast_C A$}
Let $H$ be a Hilbert space and let $P_{n}$, $n \in \mathbb N$,  be an increasing sequence of projections of dimension $n$ that $\ast$-strongly converges to $1_{B(H)}$. We will identify $P_{n} B(H) P_{n}$ with $M_{n}$.
Let $\mathcal D \subset \prod_{n\in \mathbb N} M_{n}$ be the C*-algebra of all $\ast$-strongly convergent sequences of matrices. Let $q: \mathcal D \to B(H)$ be the surjection that sends each sequence  to its $\ast$-strong limit.

\medskip

Our main tool for exploring the MF-property of amalgamated free products will be the following lifting characterization of the MF-property obtained in \cite{homotopyLifting}.

\begin{theorem}\label{characterization} (Shulman \cite[Th. 14]{homotopyLifting})  Let $A$ be a separable C*-algebra. TFAE:

\begin{itemize}

\item[(i)] $A$ is MF,

\item[(ii)] every $\ast$-homomorphism from $A$ to $B(H)$ lifts to a discrete asymptotic homomorphism from $A$ to $\mathcal D$,


\item[(iii)] there exists an embedding of $A$ into $B(H)$ that  lifts asymptotically to a discrete asymptotic homomorphism from $A$ to $\mathcal D$.
\end{itemize}
\end{theorem}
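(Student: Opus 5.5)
The natural route is the cycle (ii)$\Rightarrow$(iii)$\Rightarrow$(i)$\Rightarrow$(ii). The first implication is trivial. Since $A$ is separable it has a faithful representation on the separable space $H$. A lift of this embedding to a discrete asymptotic homomorphism into $\mathcal D$ is in particular an asymptotic lift.

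For (iii)$\Rightarrow$(i), fix an embedding $\pi: A\to B(H)$ and a discrete asymptotic homomorphism $\phi_k: A\to\mathcal D$ with $\|q\circ\phi_k(a)-\pi(a)\|\to 0$. Write $\phi_k(a)=(\phi_k^{(n)}(a))_n$ with $\phi_k^{(n)}(a)\in M_n$.

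The key observation is that strong limits are norm-lower semicontinuous. From $q(\phi_k(a))=\text{s-}\lim_n\phi^{(n)}_k(a)$ we get
\[
\liminf_n\|\phi^{(n)}_k(a)\|\ge\|q(\phi_k(a))\|,
\]
and the right-hand side tends to $\|a\|$ as $k\to\infty$. The maps $a\mapsto\phi_k^{(n)}(a)$ are coordinates of $\phi_k$, so for each fixed $n$ they are approximately multiplicative, linear and self-adjoint in $k$ with errors controlled uniformly in $n$. This holds because these conditions are norm conditions in $\prod M_n$.

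Now fix a finite set $F$ and $\epsilon>0$. Choose $k$ so that the algebraic defects on $F$ are below $\epsilon$ and $\|q\phi_k(a)\|>\|a\|-\epsilon/2$ for $a\in F$. Then choose $n$ large so that $\|\phi_k^{(n)}(a)\|>\|a\|-\epsilon$ for all $a\in F$. This gives exactly the local MF condition stated in the preliminaries.

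The main work is (i)$\Rightarrow$(ii). Given an arbitrary $\ast$-homomorphism $\rho: A\to B(H)$, we need maps $\psi_k: A\to\mathcal D$ with $q\circ\psi_k=\rho$ exactly, which are asymptotically multiplicative in norm.

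The plan is a two-part construction:
\begin{itemize}
\item a compression part, $P_n\rho(a)P_n$, which converges $\ast$-strongly to $\rho(a)$;
\item a correction part, which repairs the failure of multiplicativity of the compressions, using MF models of $A$.
\end{itemize}

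Concretely, I would use Voiculescu's theorem in its MF/quasidiagonal-relative form. Let $\sigma: A\to\prod M_n/\oplus M_n$ be an MF embedding. Then $\rho\oplus\sigma^\infty$ is approximately unitarily equivalent to $\sigma^\infty$-type representations, modulo compacts. This should let us find, for each $k$, block decompositions $H=\bigoplus_j H_j$ with finite-dimensional $H_j$ in which $\rho$ is nearly block-diagonal up to a small norm error. Here the blocks are filled by MF approximants of $A$, which is where (i) is used.

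Define $\psi_k^{(n)}(a)$ as the compression to $P_n$ of this block-diagonal model, adjusted so that its strong limit in $n$ is exactly $\rho(a)$. The adjustment is made by adding the difference $\rho(a)-\text{block model}$ compressed by $P_n$: a norm-small perturbation that converges strongly.

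The hard step is guaranteeing that compression by $P_n$ does not cut through blocks badly. I would first try choosing the $P_n$-compatible block structure: arrange, after a unitary conjugation that is itself strongly approximable by unitaries in $\mathcal D$, that each $P_n$ is a sum of whole blocks plus a controlled remainder. On that remainder one fills in with MF approximants, so the defect remains $O(\epsilon_k)$ uniformly in $n$. Since a statement of exactly this type is proved in \cite{homotopyLifting}, within this paper it can simply be invoked as Th.~14 there.
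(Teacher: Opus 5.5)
Your arguments for (ii)$\Rightarrow$(iii) and (iii)$\Rightarrow$(i) are correct. The norm is lower semicontinuous under strong convergence, and the algebraic defects of $\phi_k$ are sup-norm defects in $\prod M_n$; together these give exactly the local MF condition. The paper proves no part of this theorem: it quotes it from \cite{homotopyLifting}, so your final appeal to Th.~14 there is the same treatment.

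The sketch you give for (i)$\Rightarrow$(ii), however, has a genuine gap and would fail if carried out. You assert that an arbitrary $\rho$ becomes nearly block-diagonal in norm with respect to a decomposition of $H$ into finite-dimensional pieces. That is quasidiagonality of $\rho(A)$. For faithful $\rho$ it would force $A$ to be quasidiagonal, which is false for MF algebras such as $C^*_r(\mathbb F_2)$: it is MF by Haagerup--Thorbjornsen, but not quasidiagonal by Rosenberg's theorem, since $\mathbb F_2$ is non-amenable. Your adjustment step does not rescue this. Indeed $P_nM(a)P_n+P_n(\rho(a)-M(a))P_n=P_n\rho(a)P_n$, and the multiplicativity defect of these compressions, e.g.\ $\|P_n\rho(a)(1-P_n)\rho(a^*)P_n\|=\|(1-P_n)\rho(a^*)P_n\|^2$, is small uniformly in $n$ only if the $P_n$ almost commute with $\rho(A)$. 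That is again quasidiagonality.

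What Voiculescu's theorem actually provides is a block structure for an enlarged representation. Let $\sigma(a)=\bigoplus_k X^{(k)}(a)$ act on $L=\bigoplus_k\mathbb C^{m_k}$, where $(X^{(k)}(a))_k\in\prod M_{m_k}$ lifts an MF embedding. Let $E$ be the C*-algebra generated by $\sigma(A)$, and let $\pi_E:E\to A$ be the quotient map. Since $E\cap\mathcal K(L)=E\cap\bigoplus M_{m_k}=\ker\pi_E$, the representation $\mathrm{id}_E\oplus(\rho\circ\pi_E)$ is approximately unitarily equivalent to $\mathrm{id}_E$. Discard finitely many blocks so the block defects are small. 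Then on a given finite set, $\rho\oplus\sigma$ is $\epsilon$-close to $U\sigma(\cdot)U^*$ for a unitary $U:L\to H\oplus L$. This operator commutes exactly with finite-rank projections $Q_j\nearrow 1$, obtained by conjugating the projections onto the first $j$ blocks.

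The missing idea is how to get rid of the auxiliary summand $L$. Transport $Q_j(\rho(a)\oplus\sigma(a))Q_j$ onto $P_{d_j}H$, with $d_j=\mathrm{rank}\,Q_j$, by partial isometries $V_j$ satisfying $V_j^*V_j=Q_j$, $V_jV_j^*=P_{d_j}$ and $V_j^*\xi\to\xi\oplus 0$ for every $\xi\in H$; pad with zeros for intermediate $n$. The resulting matrices are approximately multiplicative uniformly in $n$. They converge $\ast$-strongly to exactly $\rho(a)$, because $L$ disappears in the strong limit. They are not compressions of $\rho$, and that is precisely the difference between MF and quasidiagonality that your sketch misses.
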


\begin{remark} In the above characterization of the MF property  $\mathcal D$ and $ B(H)$ can be replaced by $M_N(\mathcal D)$ and $ B(H^{\oplus N})$ respectively,  for any $N\in \mathbb N$.
\end{remark}

\begin{remark}\label{contractive} In Theorem \ref{characterization} one can replace liftability to a discrete asymptotic homomorphism by liftability to a contractive discrete asymptotic homomorphism. To arrange this, in the proof of Theorem \ref{characterization} one uses the existence of a contractive continuous section proved in \cite{sectionsCones}.
 \end{remark}

\begin{lemma}\label{asHomAmalg} Let $A, B$ and $D$ be C*-algebras and let $C$ be a common C*-subalgebra of $A$ and $B$. Let $\phi_{\lambda}^A: A\to D$ and $\phi_{\lambda}^B: B \to D$, $\lambda\in \Lambda$, be contractive (discrete) asymptotic homomorphisms such that
$$\lim_{{\lambda}\to \infty}\|\phi_{\lambda}^A(c) - \phi_{\lambda}^B(c)\|=0,$$ for each $c\in C$. Then there exists a contractive (discrete) asymptotic homomorphism $\Phi_{\lambda}: A\ast_C B\to D$, ${\lambda}\in \Lambda$, such that
$$\lim_{{\lambda}\to \infty}\|\Phi_{\lambda}^A(a)- \phi_{\lambda}^A(a)\| =0,$$
$$\lim_{{\lambda}\to \infty}\|\Phi_{\lambda}^B(b)- \phi_{\lambda}^B(b)\| =0,$$
for each $a\in A, b\in B$.
\end{lemma}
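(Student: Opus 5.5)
The plan is to pass to sequence algebras modulo null sequences, where an asymptotic homomorphism becomes an honest $\ast$-homomorphism, use the universal property of $A\ast_C B$ there, and then lift back. Set
$$D_\infty = \ell^\infty(\Lambda, D), \qquad D_0 = c_0(\Lambda, D),$$
the bounded and the vanishing (as $\lambda\to\infty$) families indexed by $\Lambda$, and let $Q: D_\infty \to D_\infty/D_0$ be the quotient map. In the continuous case, where $\Lambda=[0,\infty)$, one would instead use $C_b([0,\infty),D)$ and $C_0([0,\infty),D)$.

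\textbf{Step 1: asymptotic homomorphisms become $\ast$-homomorphisms.} A contractive (discrete) asymptotic homomorphism $\phi^A_\lambda$ defines an element $(\phi^A_\lambda(a))_\lambda \in D_\infty$ for each $a\in A$; boundedness comes from contractivity. The map
$$\pi_A(a) = Q\big((\phi^A_\lambda(a))_\lambda\big)$$
is then a $\ast$-homomorphism $A\to D_\infty/D_0$, because the defects in linearity, multiplicativity and $\ast$-preservation tend to $0$. Define $\pi_B$ in the same way. The hypothesis $\|\phi^A_\lambda(c)-\phi^B_\lambda(c)\|\to 0$ says exactly that $\pi_A|_C=\pi_B|_C$. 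By the universal property of $A\ast_C B$ there is a $\ast$-homomorphism
$$\pi: A\ast_C B\to D_\infty/D_0 \quad\text{with}\quad \pi\circ i_A=\pi_A,\ \ \pi\circ i_B=\pi_B.$$

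\textbf{Step 2: lifting back.} This is the main point. I need a lift $s: D_\infty/D_0\to D_\infty$ of $Q$ that is contractive and, in the continuous case, takes values in continuous families. Once such an $s$ is chosen, put
$$\Phi_\lambda(x) = s(\pi(x))_\lambda.$$
Any such lift automatically gives an asymptotic homomorphism. Indeed, $s(\pi(xy)) - s(\pi(x))s(\pi(y))$ lies in $\ker Q = D_0$, so its $\lambda$-component tends to $0$, and linearity and $\ast$-preservation are handled identically. No linearity or continuity of $s$ itself is needed.

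Contractivity is arranged pointwise. Every element $z$ of a quotient C*-algebra has a preimage of the same norm: take any preimage $w$, apply $f(ww^*)$ with a suitable function $f$, or equivalently use the fact that quotient maps of C*-algebras attain the norm on lifts. So I choose, for each $z$, a preimage $s(z)$ with $\|s(z)\|=\|z\|$. Then
$$\|\Phi_\lambda(x)\|\le \|\pi(x)\|\le \|x\|.$$
In the continuous case I take preimages in $C_b([0,\infty),D)$ of norm $\|z\|$, which are available by the same argument applied to that quotient. Alternatively, one can invoke the contractive continuous section from \cite{sectionsCones}, as in Remark \ref{contractive}.

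\textbf{Step 3: compatibility with the original maps.} For $a\in A$, both $s(\pi_A(a))$ and $(\phi^A_\lambda(a))_\lambda$ are lifts of $\pi_A(a)$, so their difference lies in $D_0$. This gives
$$\|\Phi_\lambda(i_A(a))-\phi^A_\lambda(a)\|\to 0,$$
and the statement for $B$ follows in the same way.

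The only delicate step is ensuring that the lift is norm-controlled and, in the continuous case, continuous in $\lambda$. Both are handled by norm-preserving lifts in the quotient of the appropriate bounded-function algebra, so I expect no real obstacle.
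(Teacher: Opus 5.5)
Your proof is correct and is essentially the paper's argument. You pass to $\ast$-homomorphisms into $C_b(\Lambda,D)/C_0(\Lambda,D)$ and use their agreement on $C$ to obtain a $\ast$-homomorphism of $A\ast_C B$; you then lift back by a norm-controlled section and compare two lifts of $\pi_A(a)$ modulo $C_0(\Lambda,D)$.

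The only difference is that you use arbitrary pointwise norm-preserving lifts instead of the contractive continuous section of \cite{sectionsCones}. This is legitimate because the paper's definition of (discrete) asymptotic homomorphism requires no continuity in the algebra variable; the paper's choice additionally makes each $\Phi_\lambda$ continuous on $A\ast_C B$.
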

\begin{proof} The asymptotic homomorphisms $\phi_{\lambda}^A: A\to D$ and $\phi_{\lambda}^B: B \to D$, $\lambda\in \Lambda$, induce $\ast$-homomorphisms $\phi^A: A \to C_b(\Lambda, D)/C_0(\Lambda, D)$ and $\phi^B: B \to C_b(\Lambda, D)/C_0(\Lambda, D)$ by formula
$$\phi_A(a) = \left(\phi_{\lambda}^A(a)\right)_{\lambda\in \Lambda} + C_0(\Lambda, D),$$
$$\phi_B(b) = \left(\phi_{\lambda}^B(b)\right)_{\lambda\in \Lambda} + C_0(\Lambda, D). $$ (Here $C_b(\Lambda, D)$ is the C*-algebra of all bounded continuous $D$-valued  functions on $\Lambda$ and $C_0(\Lambda, D)$ is its ideal consisting of all functions that vanish at infinity).  Since for each $c\in C$, $\lim_{{\lambda}\to \infty}\|\phi_{\lambda}^A(c) - \phi_{\lambda}^B(c)\|=0,$ we have
$$\phi^A(c) = \phi^B(c),$$
$c\in C$. Therefore $\phi^A$ and $\phi^B$ induce  a $\ast$-homomorphism $$\Phi = \phi^A\ast \phi^B: A\ast_C B \to C_b(\Lambda, D)/C_0(\Lambda, D).$$ By \cite[Th. 3]{sectionsCones} there is a contractive continuous section $s: C_b(\Lambda, D)/C_0(\Lambda, D) \to C_b(\Lambda, D)$. Then
$$\Phi_{\lambda} = ev_{\lambda}\circ s\circ \Phi: A\ast_C B \to D$$ is a contractive (discrete) asymptotic homomorphism. Since $\left(\Phi_{\lambda}(a)\right)_{\lambda\in \Lambda}$ and $\left(\phi_{\lambda}^A(a)\right)_{\lambda\in \Lambda}$ are both representatives of $\phi^A(a)$, we conclude that $$\lim_{{\lambda}\to \infty}\|\Phi_{\lambda}^A(a)- \phi_{\lambda}^A(a)\| =0,$$ for each $a\in A$. Similarly for $B$.
\end{proof}

\begin{lemma}\label{qau} Let $q: B \to B/I$ be  a quotient map  and let $\{i_{\lambda}\}_{\lambda\in \Lambda}$ be a quasicentral approximate unit for $I\lhd B$. Then for any $x, y\in B$  and $N\in \mathbb N$
$$\limsup_{\lambda} \|\lbrack (x(1-i_{\lambda}))^N, y\rbrack\| = \|\lbrack q(x)^N, q(y)\rbrack\|.$$
\end{lemma}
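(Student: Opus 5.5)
We must show $\limsup_\lambda \|[(x(1-i_\lambda))^N, y]\| = \|[q(x)^N,q(y)]\|$. The plan is to reduce this to the standard formula $\lim_\lambda \|b(1-i_\lambda)\| = \|q(b)\|$, valid for any approximate unit of $I$, and in particular for a quasicentral one. We then commute the factors $(1-i_\lambda)$ past $x$ and $y$ at the cost of errors that vanish in the limit. If $B$ is nonunital we work in its unitization; $I$ is still an ideal there, and $\{i_\lambda\}$ remains quasicentral for the unitization.

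\textbf{Step 1 (moving the cutoffs).} Write $e_\lambda = 1-i_\lambda$. Quasicentrality gives $\|[e_\lambda, b]\| = \|[i_\lambda,b]\|\to 0$ for every $b\in B$, and $\|e_\lambda\|\le 1$. Expand $(xe_\lambda)^N$ and repeatedly move each $e_\lambda$ to the right past the copies of $x$. This yields
$$\|(xe_\lambda)^N - x^N e_\lambda^N\| \to 0,$$
with total error bounded by roughly $N^2\|x\|^{N-1}\|[e_\lambda,x]\|$. Hence
$$\|[(xe_\lambda)^N,y]\| - \|x^Ne_\lambda^N y - y x^N e_\lambda^N\| \to 0.$$
Using $\|[e_\lambda^N,y]\|\le N\|[e_\lambda,y]\|\to 0$, this becomes
$$\limsup_\lambda \|[(xe_\lambda)^N,y]\| = \limsup_\lambda \|[x^N,y]\,e_\lambda^N\|.$$

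\textbf{Step 2 (norm of the cut-down).} Set $b=[x^N,y]$, so that $q(b)=[q(x)^N,q(y)]$. We must show $\|b e_\lambda^N\|\to\|q(b)\|$.

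For the lower bound, $be_\lambda^N - b \in I$, so $\|be_\lambda^N\|\ge \|q(be_\lambda^N)\| = \|q(b)\|$.

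For the upper bound, use that $0\le e_\lambda\le 1$, which gives $e_\lambda^{2N}\le e_\lambda$. Therefore
$$\|be_\lambda^N\|^2 = \|b e_\lambda^{2N} b^*\|\le \|b e_\lambda b^*\| = \|b e_\lambda^{1/2}\|^2.$$
It is standard that $\|c(1-i_\lambda)\|\to\|q(c)\|$ for any approximate unit, since $\|q(c)\| = \inf_{j\in I}\|c-cj\|$ when computed along the approximate unit. Applying this with $c=b$ shows $\limsup_\lambda \|be_\lambda\| \le \|q(b)\|$.

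To handle $e_\lambda^{1/2}$, rather than estimating it directly, replace $e_\lambda^N$ by $e_\lambda$: $\|be_\lambda^N\|\le \|be_\lambda\|\,\|e_\lambda^{N-1}\|\le\|be_\lambda\|$ since the powers are contractions. Then $\limsup_\lambda \|be_\lambda^N\|\le\|q(b)\|$. Combined with the lower bound, the limit exists and equals $\|q(b)\|$, which proves the lemma.

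\textbf{Main obstacle.} The main obstacle is the fact $\lim_\lambda\|c(1-i_\lambda)\| = \|q(c)\|$. This is the classical result, for instance Arveson's, that for an approximate unit of an ideal the quotient norm is attained as this limit. The plan is to cite it rather than reprove it. The other care point is the bookkeeping of commutator errors in Step 1, which is routine given quasicentrality and the uniform bound $\|e_\lambda\|\le 1$.
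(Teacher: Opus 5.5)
Your proposal is correct and follows essentially the same route as the paper. Quasicentrality reduces the left-hand side to the norm of $[x^N,y]$ cut down by $(1-i_\lambda)^N$, and the standard formula $\lim_\lambda\|c(1-u_\lambda)\|=\|q(c)\|$ for approximate units finishes the argument. The only cosmetic difference is in the last step: the paper applies that formula to the new approximate unit $u_\lambda=1-(1-i_\lambda)^N$, whereas you sandwich $\|be_\lambda^N\|$ between $\|q(b)\|$ and $\|be_\lambda\|$. The abandoned $e_\lambda^{1/2}$ detour can simply be deleted.
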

\begin{proof} Since $\{i_{\lambda}\}_{\lambda\in \Lambda}$ is quasicentral,
\begin{equation}\label{limsup}\limsup_{\lambda} \|\lbrack (x(1-i_{\lambda}))^N, y\rbrack\| = \limsup_{\lambda} \|(1-i_{\lambda})^N\lbrack x^N, y\rbrack \|.\end{equation}
Let $u_{\lambda} = 1 - (1-i_{\lambda})^N.$ Then $\{u_{\lambda}\}_{\lambda\in \Lambda}$ is itself a (quasicentral) approximate unit.
It is well-known that for any approximate unit, hence for $\{u_{\lambda}\}_{\lambda\in \Lambda}$, and any $z\in B$
$$ \limsup_{\lambda} \|(1-u_{\lambda})z\| = \|q(z)\|$$
(see e.g. \cite{LoringBook}). This and (\ref{limsup}) imply the statement.
\end{proof}

\begin{lemma}\label{crucial} Let $u, a_1, \ldots, a_N\in B/I,$ where $u$ is a unitary and $[a_i, u]=0, $ $i=1, \ldots, N$. Let $A_1, \ldots, A_N\in B$ be lifts of $a_1, \ldots, a_N$, $i=1, \ldots, N$, and let $\epsilon >0$. Then $\left(\begin{array}{cc} u &\\& -u^*\end{array}\right)$ lifts to a unitary $V\in M_2(B)$ such that $$\left\|\left[\left(\begin{array}{cc}A_i &\\& A_i\end{array}\right), V\right]\right\|\le \epsilon,$$ $i=1, \ldots, N$.
\end{lemma}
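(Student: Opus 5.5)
The plan is to write down an explicit unitary dilation of a contractive lift of $u$, and to make that lift almost commute with the $A_i$ using the quasicentral approximate unit of Lemma \ref{qau}. Throughout, $B$ is taken unital (otherwise pass to the unitization; the construction below stays inside the appropriate algebra since $q(V)$ is prescribed). For a contraction $X\in B$ consider
$$V=\left(\begin{array}{cc} X & (1-XX^*)^{1/2}\\ (1-X^*X)^{1/2} & -X^*\end{array}\right).$$
Using the identity $X(1-X^*X)^{1/2}=(1-XX^*)^{1/2}X$, which follows by polynomial approximation of the square root, one checks in the routine way that $V$ is a unitary in $M_2(B)$. If moreover $q(X)=u$, then $q(1-XX^*)=q(1-X^*X)=0$ because $u$ is unitary, so $q(V)=\mathrm{diag}(u,-u^*)$. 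So it suffices to find a contraction $X$ lifting $u$ for which $V$ almost commutes with each $\mathrm{diag}(A_i,A_i)$.

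The commutator $\left[\mathrm{diag}(A_i,A_i),V\right]$ has entries $[A_i,X]$, $[A_i,X^*]$, $[A_i,(1-XX^*)^{1/2}]$ and $[A_i,(1-X^*X)^{1/2}]$, so I will make all four small. First I note that $a_i^*$ also commutes with $u$: taking adjoints in $a_iu=ua_i$ gives $u^*a_i^*=a_i^*u^*$, and multiplying by $u$ on both sides yields $a_i^*u=ua_i^*$. Next take any contractive lift $Y$ of $u$, which exists by the standard fact that elements of a quotient lift norm-preservingly, and put $X_\lambda=Y(1-i_\lambda)$. This is a contraction with $q(X_\lambda)=u$. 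Applying Lemma \ref{qau} with $N=1$ to the pairs $(Y,A_i)$ and $(Y,A_i^*)$ gives
$$\limsup_\lambda\|[X_\lambda,A_i]\|=\|[u,a_i]\|=0,\qquad \limsup_\lambda\|[X_\lambda,A_i^*]\|=0.$$
Since $[X^*,A_i]=-[X,A_i^*]^*$, this makes both $[X_\lambda,A_i]$ and $[X_\lambda^*,A_i]$ small for large $\lambda$, for the finitely many $i$.

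For the square-root entries, I fix a polynomial $p$ with $|p(t)-\sqrt t|<\epsilon/8$ on $[0,1]$. Since $0\le 1-X_\lambda X_\lambda^*\le 1$, we get $\|(1-X_\lambda X_\lambda^*)^{1/2}-p(1-X_\lambda X_\lambda^*)\|<\epsilon/8$, and likewise for $X_\lambda^*X_\lambda$. The commutator $[A_i,p(1-X_\lambda X_\lambda^*)]$ is bounded by a constant depending only on $p$ (with all norms $\le \max\|A_i\|$, $\|X_\lambda\|\le 1$) times $\max(\|[A_i,X_\lambda]\|,\|[A_i,X_\lambda^*]\|)$, by the Leibniz rule. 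Choosing $\lambda$ large therefore makes every entry of the commutator small, and the $2\times 2$ matrix norm is at most the sum of the entry norms, which gives the bound $\epsilon$.

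The only delicate point is this uniform control of $[A_i,f(1-XX^*)]$ for the non-polynomial function $f=\sqrt{\cdot}$. The polynomial approximation above handles it, because the spectra involved stay in $[0,1]$ independently of $\lambda$, so the same polynomial $p$ works for all $\lambda$.
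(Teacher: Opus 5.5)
Your proof is correct, and its overall structure is the paper's: a contractive lift truncated by a quasicentral approximate unit, $X_\lambda=Y(1-i_\lambda)$, made to almost commute with the $A_i$ via Lemma \ref{qau}, followed by the Halmos dilation $V$. The genuine difference is how you control the off-diagonal entries $[A_i,(1-X_\lambda X_\lambda^*)^{1/2}]$ and $[A_i,(1-X_\lambda^*X_\lambda)^{1/2}]$.

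The paper requires $\|[\tilde X\tilde X^*,A_i]\|$ and $\|[\tilde X^*\tilde X,A_i]\|$ to be at most $\epsilon^2/16$, and then applies Pedersen's inequality $\|[T^{1/2},S]\|\le \frac54\|[T,S]\|^{1/2}$ for $0\le T\le 1$. You instead fix one polynomial $p$ approximating $\sqrt{t}$ uniformly on $[0,1]$ and use the Leibniz rule. This needs only the two first-order estimates on $[X_\lambda,A_i]$ and $[X_\lambda^*,A_i]$. You correctly derive the second from $[u,a_i^*]=0$, a step the paper passes over. Pedersen's inequality gives an explicit square-root modulus. Your argument is more elementary and works for any continuous function in place of $\sqrt{\cdot}$; its uniformity in $\lambda$ comes, as you note, from the spectra staying in $[0,1]$.

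Two small repairs are needed.

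\begin{enumerate}
\item Replacing $(1-X_\lambda X_\lambda^*)^{1/2}$ by $p(1-X_\lambda X_\lambda^*)$ inside a commutator with $A_i$ costs up to $2\|A_i\|\cdot\epsilon/8$, not $\epsilon/8$. So choose $p$ within $\epsilon/(8\max(1,\max_i\|A_i\|))$ of $\sqrt{t}$. The paper's use of Pedersen's inequality tacitly needs the same normalization $\|A_i\|\le 1$.
\item Drop the parenthetical about passing to a unitization. The statement already presupposes $B$ unital (in the applications $B=M_2(\mathcal D)$). Moreover, in a forced unitization $u$ is no longer unitary, so $q(1-XX^*)$ would not vanish.
\end{enumerate}
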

\begin{proof}
 First we lift $u$ to a contraction $X\in B$. Let  $\{i_{\lambda}\}, \lambda \in \Lambda,$ be a quasicentral approximate unit for $I\lhd B$.
 By Lemma \ref{qau} there exists $\lambda$ such that for $\tilde X = X(1-i_{\lambda})$ we have
$$\|[\tilde X, A_i]\|\le \frac{\epsilon}{2},$$
$$\|[\tilde X^*, A_i]\|\le \frac{\epsilon}{2},$$
$$\|[\tilde X \tilde X^*, A_i]\|\le \frac{\epsilon^2}{16},$$
$$\|[\tilde X^*\tilde X, A_i]\|\le \frac{\epsilon^2}{16},$$
$i=1, \ldots, N$. We define a unitary $V\in M_2(B)$ by
$$V = \left(\begin{array}{cc} \tilde X & \sqrt{1 - \tilde X\tilde X^*} \\ \sqrt{1 - \tilde X^*\tilde X} & -\tilde X^*\end{array}\right).$$
Then
$$\left[V, \left(\begin{array}{cc} A_i & 0\\0 & A_i\end{array}\right)\right] = \left(\begin{array}{cc}
[\tilde X, A_i] & \lbrack (1-\tilde X{\tilde X}^*)^{1/2}, A_i\rbrack \\  \lbrack (1-\tilde X^*\tilde X)^{1/2}, A_i\rbrack & [-\tilde X^*, A_i] \end{array}\right),$$ $i=1, \ldots, N$.

By Pedersen's inequality (which says that $\|[A^{1/2}, B]\|\le \frac{5}{4} \|[A, B]\|^{1/2}$ when $0 \le A \le 1$) we have
$$\|\lbrack (1-\tilde X{\tilde X}^*)^{1/2}, A_i\rbrack\| \le \frac{5}{4}\|\lbrack 1- \tilde X\tilde X^*, A_i\rbrack\|^{1/2}  = \frac{5}{4}\|\lbrack  \tilde X\tilde X^*, A_i\rbrack\|^{1/2} < \epsilon/2, $$ $i=1, \ldots, N$, and the same for $\lbrack (1-\tilde X{\tilde X}^*)^{1/2}, A_i\rbrack\|$.
Therefore
\begin{multline}\left\|\left[V, \left(\begin{array}{cc} A_i & 0\\ 0& A_i\end{array}\right)\right]\right\| \le \\ \left\|\left(\begin{array}{cc}
[\tilde X, A_i] &  0\\  0 & [-\tilde X^*, A_i] \end{array}\right)\right\| + \\ \left\|\left(\begin{array}{cc}
  0& \lbrack (1-\tilde X{\tilde X}^*)^{1/2}, A_i\rbrack \\  \lbrack (1-\tilde X^*\tilde X)^{1/2}, A_i\rbrack & 0 \end{array}\right)\right\|
  < \epsilon/2 + \epsilon/2 = \epsilon, \end{multline}
  $i=1, \ldots, N$.

  \end{proof}

\begin{theorem}\label{amalgamatedProduct} Let $A$ be a separable  MF C*-algebra and let $C$ be any C*-subalgebra of $A$. Then $A\ast_C A$ is MF.
\end{theorem}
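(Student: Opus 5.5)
The plan is to verify condition (iii) of Theorem \ref{characterization}, in the matrix-amplified form allowed by the first Remark after it. So we want a faithful representation of $A\ast_C A$ on some $H^{\oplus N}$ that lifts asymptotically to a discrete asymptotic homomorphism into $M_N(\mathcal D)$. We will assemble this lift from Lemma \ref{asHomAmalg}, using Lemma \ref{crucial} to produce the second copy of $A$.

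\emph{Step 1: a convenient faithful representation.} Fix a faithful representation $\pi$ of $A\ast_C A$ on $H$ and put $\pi_j=\pi\circ i_j$ for $j=1,2$, so that $\pi_1|_C=\pi_2|_C$. Let $\rho=\pi_1\oplus\pi_2: A\to B(H\oplus H)$, and let $W$ be the flip unitary on $H\oplus H$. Then
\[
W\rho(a)W^*=\pi_2(a)\oplus\pi_1(a), \qquad \rho(c)=\pi_1(c)\oplus\pi_1(c) \quad (c\in C),
\]
so $W$ commutes with $\rho(C)$. By the universal property there is therefore a $\ast$-homomorphism $\Psi$ with $\Psi\circ i_1=\rho$ and $\Psi\circ i_2=W\rho(\cdot)W^*$. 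Both of these families are block diagonal, and their first diagonal entries are $\pi_1$ and $\pi_2$. Hence $\Psi=\pi\oplus\pi'$ for some representation $\pi'$, and in particular $\Psi$ is faithful. To match the form of Lemma \ref{crucial}, we amplify once more: we use $\rho\oplus\rho$ together with the unitary $u=W\oplus W$. Since $W$ is self-adjoint, $\mathrm{diag}(W,-W^*)$ is unitarily equivalent, via a scalar unitary in $M_2$, to $\mathrm{diag}(W,W)$; that scalar conjugation can be absorbed into the representation. This yields a faithful $\tilde\Psi$ on $H^{\oplus 4}$ of exactly the same shape, with the role of $W$ played by $\mathrm{diag}(W,-W^*)$.

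\emph{Step 2: lifting.} Since $A$ is MF, Theorem \ref{characterization}(ii) and Remark \ref{contractive} (in the $M_2$ version) give a contractive discrete asymptotic homomorphism $\phi_n:A\to M_2(\mathcal D)$ with $q\circ\phi_n=\rho$ exactly. Choose a dense sequence $c_1,c_2,\dots$ in $C$. For each $n$ we apply Lemma \ref{crucial} with $B=M_2(\mathcal D)$, $I=\ker q$, the unitary $W$, the elements $a_i=\rho(c_i)$ (which commute with $W$), their lifts $A_i=\phi_n(c_i)$ for $i\le n$, and $\epsilon=1/n$. This produces a unitary $V_n\in M_4(\mathcal D)$ lifting $\mathrm{diag}(W,-W^*)$ such that
\[
\bigl\|[V_n,\phi_n(c_i)\oplus\phi_n(c_i)]\bigr\|\le 1/n \quad\text{for } i\le n.
\]
Now set $\psi^1_n=\phi_n\oplus\phi_n$ and $\psi^2_n=V_n\psi^1_nV_n^*$. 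Both are contractive discrete asymptotic homomorphisms $A\to M_4(\mathcal D)$. By density of $\{c_i\}$ and uniform boundedness, $\|\psi^1_n(c)-\psi^2_n(c)\|\to0$ for every $c\in C$.

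\emph{Step 3: conclusion.} Lemma \ref{asHomAmalg} gives a contractive discrete asymptotic homomorphism $\Phi_n:A\ast_C A\to M_4(\mathcal D)$ with $\Phi_n\circ i_j$ asymptotically equal to $\psi^j_n$. Applying $q$, we get that $q\circ\Phi_n$ agrees asymptotically with $\tilde\Psi$ on $i_1(A)\cup i_2(A)$. Since $q\circ\Phi_n$ is asymptotically multiplicative and $\tilde\Psi$ is a homomorphism, this agreement extends to the algebra generated, hence to all of $A\ast_C A$. So the faithful $\tilde\Psi$ lifts asymptotically, and (iii) holds.

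The main obstacle is Step 1: finding a unitary that commutes with the image of $C$ while still making the resulting representation of $A\ast_C A$ faithful. The flip trick handles this, provided the bookkeeping between $\mathrm{diag}(W,-W^*)$ and $\mathrm{diag}(W,W)$ is done correctly. A secondary point is that Lemma \ref{crucial} only controls finitely many commutators at a time, which is why $V_n$ must be chosen depending on $n$ and the exact lifts $\phi_n(c_i)$.
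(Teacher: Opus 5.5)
Your proposal is correct and is essentially the paper's proof. It uses the same flip-doubled faithful representation, an exact contractive lift $\phi_n$ of $\pi_1\oplus\pi_2$, Lemma \ref{crucial} to produce $V_n$ almost commuting with $\phi_n(c_i)\oplus\phi_n(c_i)$, and then Lemma \ref{asHomAmalg} with Theorem \ref{characterization}(iii).

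The only difference is cosmetic and concerns the sign, which needs no scalar unitary. Conjugating the block-diagonal $\rho\oplus\rho$ by $\mathrm{diag}(W,-W^*)$ already gives $W\rho(\cdot)W^*\oplus W\rho(\cdot)W^*$, because the sign cancels and $W=W^*$, so $\tilde\Psi=\Psi\oplus\Psi$ directly.
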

\begin{proof} Let $\pi = \pi_1\ast\pi_2: A\ast_C A \to B(H)$ be an embedding. Then 
\begin{equation}\label{clarification} \pi_1|_C = \pi_2|C.\end{equation}
 Let
$$\rho_1 = \pi_1\oplus \pi_2, \; \rho_2 = \pi_2\oplus \pi_1.$$
Then $(\rho_1\oplus \rho_1)\ast (\rho_2\oplus \rho_2): A\ast_C A \to B\left(H^{\oplus 4}\right)$ is also an embedding. We will show that it asymptotically lifts to a discrete asymptotic homomorphism from $A\ast_C A$ to $M_4(\mathcal D)$. Theorem \ref{characterization}  then finishes the proof.

Since $\rho_1 = u^*\rho_2 u$, where $u = \left(\begin{array}{cc} 0 & 1\\1 & 0\end{array}\right)$, and since, as follows from (\ref{clarification}), $\rho_1|_C = \rho_2|_C$, we have
$$\rho_2(c) = u^*\rho_2(c)u,$$
$c\in C$, or, equivalently,
$$[\rho_2(c), u]=0,$$
$c\in C$.  Since $A$ is MF, by Theorem \ref{characterization} and Remark \ref{contractive} the representation $\rho_2$ lifts to a contractive discrete asymptotic homomorphism $\phi_n: A\to M_2(\mathcal D)$, $n\in \mathbb N$.  Let  $\{c_1, c_2, \ldots\}$ be a dense subset of $C$ and let  $F_k = \{c_1, \ldots, c_k\}$.
By Lemma \ref{crucial} there exists a lift $V_k\in M_4(\mathcal D)$ of $\left(\begin{array}{cc} u &\\& -u^*\end{array}\right)$ such that
\begin{equation}\label{commutator}\|\left[V_k, \left(\begin{array}{cc} \phi_k(c) & 0\\ 0& \phi_k(c)\end{array}\right)\right]\| \le \frac{1}{k}, \end{equation}
$c\in F_k$.

We define  asymptotic homomorphisms $\phi_k^{(1)}, \phi_k^{(2)}: A \to  M_4(\mathcal D)$, $k\in \mathbb N$, by
  $$\phi_k^{(2)} = \phi_k\oplus \phi_k, \; \phi_k^{(1)} = V_k^*\phi_k^{(2)}V_k.$$
  Then $$q\circ \phi_k^{(2)} = \rho_2\oplus \rho_2,$$
  $$ q\circ \phi_k^{(1)} = (u \oplus (-u^*))^* (\rho_2\oplus \rho_2)(u\oplus (-u^*) = (u \oplus u)^* (\rho_2\oplus \rho_2)(u\oplus u) =
  \rho_1\oplus \rho_1$$
  (we used here that $u^*=u$).

    Now we will show that the asymptotic homomorphisms $\phi_k^{(1)}, \phi_k^{(2)}$ asymptotically agree on $C$.
  Let $c\in C, \epsilon > 0$. There exists $k_0\in \mathbb N$ and $c_0\in F_{k_0}$ such that $\frac{1}{k_0} < \epsilon$ and $\|c-c_0\|< \epsilon.$
  Since $\phi_k^{(1)}, \phi_k^{(2)}$ are contractive asymptotic homomorphisms,
  there exists $k_1> k_0 $ such that for any $k> k_1$
  $$\|\phi_k^{(i)}(c) - \phi_k^{(i)}(c_0)\|\le \| \phi_k^{(i)}(c-c_0)\|+\epsilon \le \|c-c_0\| + \epsilon < 2\epsilon, $$
  $i = 1, 2$. Then, using (\ref{commutator}), for any $k> k_1$ we obtain
  \begin{multline*}\|\phi_k^{(1)}(c) - \phi_k^{(2)}(c)\| < 4 \epsilon + \|\phi_k^{(1)}(c_0) - \phi_k^{(2)}(c_0)\| \\
   = 4 \epsilon + \|V_k^*\phi_k^{(2)}(c_0)V_k - \phi_k^{(2)}(c_0)\| = 4 \epsilon + \|\lbrack V_k, \phi_k^{(2)}(c_0)\rbrack\|\le 4\epsilon + \frac{1}{k_0}< 5 \epsilon.\end{multline*}
   Thus, for each $c\in C$,
  $$\lim_{k\to \infty} \|\phi_k^{(1)}(c) - \phi_k^{(2)}(c)\|=0.$$ By Lemma \ref{asHomAmalg} there exists  an asymptotic homomorphism $\Phi_k: A\ast_C A\to M_4(\mathcal D)$, $k\in \mathbb N$,  such that
  $$\lim_{k\to \infty} \|\Phi_k(i_1(a))- \phi_k^{(1)}(a)\|=0, $$
  $$\lim_{k\to \infty} \|\Phi_k(i_2(a))- \phi_k^{(2)}(a)\|=0, $$
  $a\in A$. Applying  the quotient map $q$  to these equalities we obtain that
  $\Phi_k|_{i_1(A)}$, $k\in \mathbb N$,  is an asymptotic lift of $\rho_1\oplus \rho_1$ and $\Phi_k|_{i_2(A)}$, $k\in \mathbb N$, is an asymptotic lift of $\rho_2\oplus \rho_2$. Then,  for every $x$ in the dense subset  $$E = span \{i_1(a_1)i_2(a_2)\ldots i_1(a_n)i_2(a_{n+1})\;|\;n\in \mathbb N, a_i\in A\}$$ of $A\ast_C A$, $\|q\circ \Phi_k(x)- (\rho_1\oplus \rho_1)\ast (\rho_2\oplus\rho_2)(x)\|\to 0$ as $k\to \infty$.
   It implies that $\Phi_k$, $k\in \mathbb N$, is an asymptotic lift of $(\rho_1\oplus \rho_1)\ast (\rho_2\oplus\rho_2)$. By Theorem \ref{characterization}, $A\ast_C A$ is MF.
\end{proof}

Recall that a group is called {\it maximally almost periodic (MAP)} if it has a separating family of finite-dimensional representations. E.g. any residually finite group is MAP.

A C*-algebra is called  {\it residually finite-dimensional (RFD)} if it has a separating family of finite-dimensional representations.

\begin{example}\label{HigmanExample} In \cite{Higman} Higman proved that the group
$$G = \left<a, b, c\;| \; a^{-1}ca = c^2 = b^{-1}cb \right>$$ is not residually finite. Since it is finitely generated, it is also non-maximally almost periodic (non-MAP) by Maltsev's Theorem. This group can be written as the group double
$$G = BS(1, 2)\ast_{\mathbb Z} BS(1, 2),$$ where $BS(1, 2) = \left<a,  c\;| \; a^{-1}ca = c^2 \right>$ is the Baumslag-Solitar group and  $\mathbb Z = \left< c\right>$.
Then $C^*(G) = C^*(BS(1, 2))\ast_{C(\mathbb T)} C^*(BS(1, 2)).$ Since  $BS(1, 2)$ is amenable and therefore has MF full group C*-algebra (\cite{TWW}), by Theorem \ref{amalgamatedProduct} $C^*(G)$ is MF.
\end{example}

\begin{example} A group double of a residually finite (RF) group need not be RF even when amalgamation is done over a central subgroup (\cite{RFDamalg}). Namely for the Abels group
 $$\Gamma = \left\{ \left(\begin{array}{cccc}1&x_{12}&x_{13}&x_{14}\\ 0 &p^k&x_{23}&x_{24}\\0&0&p^n&x_{34}\\0&0&0&1 \end{array}\right) : x_{ij}\in \mathbb Z\left[\frac{1}{p}\right] , k, n\in \mathbb Z\right\}$$  and its central subgroup
$$N = \left\{ \left(\begin{array}{cccc}1&0&0&x \\0&1&0&0\\ 0&0&1&0\\ 0&0&0&1 \end{array}\right) : x\in \mathbb Z \right\} \cong \mathbb Z,$$ the amalgamated free product   $\Gamma\ast_{N} \Gamma$ is not maximally almost periodic, hence is not RF (\cite[Prop.8.]{RFDamalg}). Since $\Gamma$ is amenable RF and therefore $C^*(\Gamma)$ is RFD, again by Theorem \ref{amalgamatedProduct} we see that $C^*(\Gamma\ast_{N} \Gamma)$ is MF.
\end{example}

\subsection{General amalgamated free products}

Here we find necessary and sufficient conditions for general amalgamated free products to be MF.

\begin{theorem}\label{inclusion} (\cite[Th. 4.11]{LP}) Let $A, B, D$ be unital C*-algebras and $C$ be a separable unital C*-algebra. Let ${\theta}_A: C \to A$, ${\theta}_B: C \to B$,  and $\phi_A: A \to D$, $\phi_B: B \to D$ be unital inclusions such that $\phi_A\circ {\theta}_A= \phi_B\circ {\theta}_B$. Then $A\ast_C B$ embeds into $D\ast_C D$.
\end{theorem}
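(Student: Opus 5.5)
The plan is to realize $A\ast_C B$ inside $D\ast_C D$ by amalgamating two embeddings, and then to show that the resulting map is faithful. Here $D\ast_C D$ means the double of $D$ over the common copy $\phi_A\theta_A(C)=\phi_B\theta_B(C)$.

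First, construct the map. Let $j_1,j_2: D\to D\ast_C D$ be the two canonical embeddings. Consider the $\ast$-homomorphisms $j_1\circ\phi_A: A\to D\ast_C D$ and $j_2\circ\phi_B: B\to D\ast_C D$. Since $j_1$ and $j_2$ agree on the amalgamated copy of $C$ in $D$, and since $\phi_A\circ\theta_A=\phi_B\circ\theta_B$, we get
\[
j_1\circ\phi_A\circ\theta_A = j_2\circ\phi_B\circ\theta_B .
\]
The universal property of $A\ast_C B$ therefore gives a unital $\ast$-homomorphism $\Psi=(j_1\phi_A)\ast(j_2\phi_B): A\ast_C B\to D\ast_C D$. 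It remains to show that $\Psi$ is injective.

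For injectivity I would use the standard description of the full amalgamated free product through representations. The norm on $A\ast_C B$ is the supremum over pairs of representations $\sigma_A,\sigma_B$ on a common Hilbert space that agree on $C$. So it suffices to show that every such compatible pair $(\sigma_A,\sigma_B)$ on a Hilbert space $K$ dilates, or extends after amplification, to a compatible pair $(\tau_1,\tau_2)$ of representations of $D$ on some $L\supseteq K$. Concretely, we want $\tau_1\circ\phi_A$ restricted to $K$ to be $\sigma_A$, $\tau_2\circ\phi_B$ restricted to $K$ to be $\sigma_B$, with $K$ reducing for both, and with $\tau_1$ and $\tau_2$ agreeing on $C$. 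Then $\tau_1\ast\tau_2$ composed with $\Psi$ contains $\sigma_A\ast\sigma_B$ as a subrepresentation, which gives $\|\Psi(x)\|\ge\|x\|$.

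The key step, and the main obstacle, is building this extension while keeping the two representations of $D$ compatible on $C$. I would proceed in three stages.
\begin{enumerate}
\item Extend $\sigma_A$ to a representation of $D$ (Arveson/Stinespring: extend a state or representation from $\phi_A(A)$ to $D$, then dilate), obtaining $\tau_1'$ on $K_1\supseteq K$ with $K$ invariant under $\phi_A(A)$.
\item Do the same for $\sigma_B$, obtaining $\tau_2'$ on $K_2\supseteq K$.
\item Iterate an amalgamation-style construction over $C$: on $K_1\ominus K$ the representation $\tau_1'|_C$ must be matched by some representation of $B$ that restricts correctly to $C$, and similarly on the other side.
\end{enumerate}
This back-and-forth ping-pong resembles the construction of reduced or full amalgamated products via $C$-valued conditional-expectation-free inductive limits. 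Separability of $C$ is what lets the induced representations of $C$ be absorbed by an infinite-multiplicity amplification, via Voiculescu-type absorption. Concretely, I would first replace every representation by its infinite amplification. I would then use that a representation of $C$ induced from $D$ can be matched up to unitary equivalence by one induced from $A$ or $B$, at least approximately, and close up with an inductive limit over countably many steps. If exact unitary equivalence fails, I would fall back to approximate unitary equivalence (Voiculescu, using separability of $C$) and argue only norm inequalities on the dense $\ast$-subalgebra of words, which is enough for injectivity.
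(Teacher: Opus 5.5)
The paper does not prove this statement; it quotes it from \cite[Th.~4.11]{LP}. Remark~\ref{howEmbeddingLooks} only records that the embedding is $(i_1\circ\phi_A)\ast(i_2\circ\phi_B)$, which is exactly your $\Psi$.

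Your reduction is sound. Write $\kappa=\phi_A\theta_A=\phi_B\theta_B$ and $\sigma_C=\sigma_A\theta_A=\sigma_B\theta_B$. It suffices to find representations $\tau_1,\tau_2$ of $D$ on some $L\supseteq K$ with $\tau_1\kappa=\tau_2\kappa$ \emph{exactly}, such that $K$ reduces $\tau_1\phi_A(A)$ and $\tau_2\phi_B(B)$ with compressions $\sigma_A$ and $\sigma_B$.

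The gap is that you never produce such a pair. Stage~3 is only a description, and it aims at the wrong object. Off $K$ nothing needs to come from $A$ or $B$; what must be matched are the restrictions to $\kappa(C)$ of two representations of $D$.

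Your fallback also fails as stated. If $\tau_1\kappa$ and $\tau_2\kappa$ agree only approximately, then $(\tau_1,\tau_2)$ is not a representation of $D\ast_C D$. Norm estimates on words then give no lower bound for $\|\Psi(x)\|$ without an extra device, for instance unitaries that fix $K$ together with a passage to $\ell^\infty(B(L))/c_0(B(L))$. Voiculescu's theorem would in addition need the leftover representations of $C$ to have equal kernels and to live on separable spaces. You have arranged neither, and $A$, $B$, $D$ are not assumed separable. Separability of $C$ plays no role in this argument.

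The missing idea is that exact matching is free, because both sides are representations of the same algebra $D$. After stages~1 and~2 you have $\tau_1'$ on $K\oplus K_1'$ and $\tau_2'$ on $K\oplus K_2'$, with $\tau_1'\kappa=\sigma_C\oplus\alpha$ and $\tau_2'\kappa=\sigma_C\oplus\beta$. Let $\rho=(\tau_1'\oplus\tau_2')^{(\infty)}$ on $M$. Since $\alpha$ and $\beta$ are both direct summands of $(\tau_1'\oplus\tau_2')\kappa$, an Eilenberg swindle gives exact unitary equivalences
\[
\alpha\oplus\rho\kappa\;\cong\;\rho\kappa\;\cong\;\beta\oplus\rho\kappa ,
\]
implemented by a unitary $W\colon K_2'\oplus M\to K_1'\oplus M$.

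Put $L=K\oplus K_1'\oplus M$, $\tau_1=\tau_1'\oplus\rho$, and $\tau_2(d)=(1_K\oplus W)\,(\tau_2'\oplus\rho)(d)\,(1_K\oplus W)^*$. Then $\tau_1\kappa=\tau_2\kappa$. Since $1_K\oplus W$ is the identity on $K$, the space $K$ still reduces $\tau_1\phi_A(A)$ and $\tau_2\phi_B(B)$ with compressions $\sigma_A$ and $\sigma_B$.

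Alternatively, a countable ping-pong also closes up exactly: dilate the leftover representation of $C$ alternately to a representation of $D$ on each side. With either version of this step inserted, your argument is a complete proof, and it does not use separability of $C$.
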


\begin{remark}\label{howEmbeddingLooks} In fact the embedding in the theorem above is given by the $\ast$-homomorphism $(i_1\circ\phi_A)\ast (i_2\circ \phi_B)$ that sends $A$ to the first copy of $D$ via $\phi_A$ and $B$ to the second copy of $D$ via $\phi_B$. (This $\ast$-homomorphism is well-defined since $\phi_A\circ \theta_A = \phi_B\circ \theta_B$).
\end{remark}

\begin{lemma}\label{unitization} (\cite[Lemma. 4.4]{LP}) Let $A^+$ denote the forced unitization of $A$. Then for any $A, B, C$
$$A^+\ast_{C^+} B^+ = (A\ast_C B)^+.$$
\end{lemma}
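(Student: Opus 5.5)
The plan is to build two mutually inverse unital $\ast$-homomorphisms using only the universal properties. These are the universal property of the amalgamated free product (unital and non-unital versions, as in the definition above) and that of the forced unitization. The latter says: for any C*-algebra $E$, any $\ast$-homomorphism $E \to \mathcal E$ into a unital C*-algebra extends uniquely to a unital $\ast$-homomorphism $E^+ \to \mathcal E$.

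First I make the setup precise. The inclusions $\theta_A, \theta_B$ extend to unital maps $\theta_A^+: C^+ \to A^+$ and $\theta_B^+: C^+\to B^+$. These are again injective, since $A^+ = A\oplus \mathbb C 1$ as a vector space and $\theta_A^+(c+\lambda 1) = \theta_A(c)+\lambda 1$. So $A^+\ast_{C^+} B^+$ is a unital amalgamated free product in the sense of the definition, with unital embeddings $j_A, j_B$. Let $i_A, i_B$ denote the canonical maps of $A, B$ into $A\ast_C B \subset (A\ast_C B)^+$.

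\emph{Map $\Psi: A^+\ast_{C^+}B^+ \to (A\ast_C B)^+$.} Extend $i_A$ and $i_B$ to unital maps $i_A^+: A^+ \to (A\ast_C B)^+$ and $i_B^+: B^+\to (A\ast_C B)^+$. Then $i_A^+\circ\theta_A^+ = i_B^+\circ \theta_B^+$, because the two sides agree on $C$ (as $i_A\circ\theta_A = i_B\circ\theta_B$) and on $1$. The unital universal property then gives a unital $\Psi$ with $\Psi\circ j_A = i_A^+$ and $\Psi\circ j_B = i_B^+$.

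\emph{Map $\Phi: (A\ast_C B)^+ \to A^+\ast_{C^+}B^+$.} The restrictions $j_A|_A$ and $j_B|_B$ satisfy $j_A|_A\circ\theta_A = j_B|_B\circ\theta_B$, since $\theta^+$ restricts to $\theta$ on $C$. The non-unital universal property therefore gives $\Phi_0: A\ast_C B \to A^+\ast_{C^+}B^+$. Because the target is unital, $\Phi_0$ extends to a unital $\Phi$ on the forced unitization.

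\emph{Inverse check.} The composition $\Psi\circ\Phi$ is a unital $\ast$-homomorphism that fixes $i_A(A)$, $i_B(B)$ and $1$, and these generate $(A\ast_C B)^+$. Likewise $\Phi\circ\Psi$ fixes $j_A(A)$, $j_B(B)$ and $1$, hence all of $j_A(A^+)\cup j_B(B^+)$, which generates $A^+\ast_{C^+}B^+$. Since $\ast$-homomorphisms agreeing on a generating set are equal, both composites are the identity. So $\Psi$ is an isomorphism that identifies the canonical copies of $A^+$ and $B^+$.

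\emph{Main obstacle.} The only point needing care is that the unitization is the \emph{forced} one. This guarantees that $\theta^+$ stays injective, so the unital amalgamated free product is formed over genuine inclusions. It also guarantees that the extension of $\Phi_0$ exists and is unital even if $A\ast_C B$ happens to already have a unit. Beyond this, the argument is a formal universal-property check.
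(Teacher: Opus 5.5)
Your proof is correct and complete. The paper gives no proof of this lemma and only cites \cite[Lemma 4.4]{LP}; your two-sided universal-property argument supplies a self-contained one. In it, the unital maps $i_A^+, i_B^+$ give $\Psi$, the non-unital maps $j_A|_A, j_B|_B$ give $\Phi_0$ and then its unital extension $\Phi$, and both composites fix generating sets, so they are the identity. You also correctly locate where the \emph{forced} unitization is needed: it keeps $\theta_A^+,\theta_B^+$ injective, and it makes the extension of $\Phi_0$ exist and be unital even when $A\ast_C B$ is already unital.
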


\begin{theorem}\label{general} Let $A, B, C$ be separable C*-algebras and $\theta_A: C \to A$, $\theta_B: C \to B$ be inclusions. Then $A\ast_C B$ is MF if and only if there exist embeddings $\phi_A: A \to \prod M_n /\oplus M_n$ and $\phi_B: B\to  \prod M_n /\oplus M_n$ such that $\phi_A\circ \theta_A = \phi_B\circ \theta_B$.
\end{theorem}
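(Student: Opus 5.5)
The necessity direction is immediate. If $A\ast_C B$ is MF, fix an embedding $\Psi: A\ast_C B\to \prod M_n/\oplus M_n$ and put $\phi_A=\Psi\circ i_A$ and $\phi_B=\Psi\circ i_B$. These maps are injective because $i_A$ and $i_B$ are embeddings. They satisfy $\phi_A\circ\theta_A=\phi_B\circ\theta_B$ because $i_A\circ\theta_A=i_B\circ\theta_B$.

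For sufficiency, I plan to combine Theorem \ref{inclusion} with Theorem \ref{amalgamatedProduct}. The natural candidate for $D$ is the C*-subalgebra of $\prod M_n/\oplus M_n$ generated by $\phi_A(A)\cup\phi_B(B)$. It is separable, and it is MF as a subalgebra of $\prod M_n/\oplus M_n$. The amalgamating algebra is $\phi_A(\theta_A(C))=\phi_B(\theta_B(C))\cong C$, and $C$ sits inside $D$ identically for both copies. Theorem \ref{amalgamatedProduct} then shows that $D\ast_C D$ is MF. Theorem \ref{inclusion} gives that $A\ast_C B$ embeds into $D\ast_C D$, and a subalgebra of an MF algebra is MF, which finishes this direction.

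The main obstacle is that Theorem \ref{inclusion} is stated for unital algebras and unital inclusions, while Theorem B concerns arbitrary separable $A,B,C$. I would handle this with forced unitizations and Lemma \ref{unitization}. Replace $A,B,C$ by $A^+,B^+,C^+$ and extend $\theta_A,\theta_B$ to unital inclusions. Replace $\prod M_n/\oplus M_n$ by $E=(\prod M_n/\oplus M_n)^+$, which is MF, since the forced unitization of an MF algebra is MF: add a direct summand carrying the character. Extend $\phi_A,\phi_B$ to unital injective maps $A^+\to E$ and $B^+\to E$; these remain compatible on $C^+$. Take $D\subset E$ to be the separable unital subalgebra generated by the images, which is unital and MF. Theorem \ref{inclusion} then embeds $A^+\ast_{C^+}B^+$ into $D\ast_{C^+}D$. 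Theorem \ref{amalgamatedProduct}, applied to $D$ and the subalgebra $C^+$, shows that $D\ast_{C^+}D$ is MF.

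Lemma \ref{unitization} identifies $A^+\ast_{C^+}B^+$ with $(A\ast_C B)^+$. Hence $A\ast_C B$, being a subalgebra of this algebra, is MF. Beyond this, the only points to check are routine: the unitized maps stay injective, and in the unital setting Theorem \ref{amalgamatedProduct} applies to the unital free product, either by the same proof or by the same unitization trick.
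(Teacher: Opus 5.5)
Your proposal is correct and follows essentially the same route as the paper. Necessity comes from composing an embedding of $A\ast_C B$ with $i_A, i_B$. For sufficiency, Theorem \ref{inclusion} embeds $A\ast_C B$ into $D\ast_C D$ with $D=C^*(\phi_A(A),\phi_B(B))$, and Theorem \ref{amalgamatedProduct} then applies, with the non-unital case reduced via forced unitizations and Lemma \ref{unitization}. Your explicit passage to $(\prod M_n/\oplus M_n)^+$, which keeps the unitized maps injective and compatible on $C^+$, is a useful detail that the paper leaves implicit when it says ``the corresponding maps''.
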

\begin{proof} "Only if" is clear. We prove the "if" part. First, we consider the unital case. Let
$$D = C^*(\phi_A(A), \phi_B(B)) \subset  \prod M_n /\oplus M_n.$$  By Theorem \ref{inclusion} $A\ast_C B \subset D\ast_C D$. Since $D$ is a separable MF C*-algebra, by Theorem \ref{amalgamatedProduct} $D\ast_C D$ is MF. Therefore $A\ast_C B$ is MF.

In the non-unital case (that is, when any of the C*-algebras or the inclusions is non-unital) we consider the forced unitizations of $A, B, C$ and the corresponding maps. By what was proved above, $A^+\ast_{C^+} B^+$ is MF. Lemma \ref{unitization} finishes the proof.
\end{proof}

Now we consider amalgamated free products of more than two factors. If $\theta_j: C \to A_j$, $j=1, \ldots, N$, are inclusions, then the corresponding amalgamated free product will be denoted by $\ast_{j=1}^N (A_j, \theta_j)$. Let $i_j: A_j\to \ast_{j=1}^k (A_j, \theta_j)$ denote the canonical inclusion.  Using the universal property of amalgamated free products, it is straightforward to check that
$$\ast_{j=1}^N (A_j, \theta_j) = \left(\ast_{j=1}^{N-1} (A_j, \theta_j)\right) \ast_{i_i\circ \theta_1, \theta_N } A_N.$$

\begin{lemma}\label{manySameFactors} If $A$ is a separable MF C*-algebra and $\theta: C\to A$ is an inclusion, then $\ast_{j=1}^N (A, \theta)$ is MF.
\end{lemma}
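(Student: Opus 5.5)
I would argue by induction on $N$, using the decomposition
$$\ast_{j=1}^N (A, \theta) = \left(\ast_{j=1}^{N-1} (A, \theta)\right) \ast_{i_1\circ \theta, \theta} A$$
stated just before the lemma, together with Theorem \ref{general}. The case $N=1$ is trivial, and $N=2$ is Theorem \ref{amalgamatedProduct}. All algebras involved are separable, since $A$ is separable and the amalgamated free product is generated by countably many copies of a separable algebra.

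For the inductive step, put $P = \ast_{j=1}^{N-1}(A,\theta)$ and assume $P$ is MF. By Theorem \ref{general} it suffices to find embeddings $\phi_P: P \to \prod M_n/\oplus M_n$ and $\phi_A: A\to \prod M_n/\oplus M_n$ that agree on $C$, i.e. $\phi_P\circ i_1\circ\theta = \phi_A\circ \theta$. Fix any embedding $\phi_P$, which exists since $P$ is MF by the inductive hypothesis. Define $\phi_A = \phi_P\circ i_1$. Since the canonical inclusion $i_1: A\to P$ is injective (part of the definition of the amalgamated free product), $\phi_A$ is an embedding, and the compatibility on $C$ holds by construction. Theorem \ref{general} then shows that $P\ast_{i_1\circ\theta,\theta} A = \ast_{j=1}^N(A,\theta)$ is MF.

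The only points needing care are the following.
\begin{itemize}
\item The associativity identification of iterated amalgamated free products is asserted in the paper as following from the universal property.
\item Injectivity of the canonical maps $i_j$.
\item The unital versus non-unital bookkeeping, which Theorem \ref{general} already handles through Lemma \ref{unitization}.
\end{itemize}
I do not expect a genuine obstacle here; the main work is all contained in Theorem \ref{general}. Alternatively, one could avoid induction by embedding $\ast_{j=1}^N(A,\theta)$ into a double $D\ast_C D$ in the spirit of Theorem \ref{inclusion}, but the inductive route is more direct.
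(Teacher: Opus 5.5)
Your proof is correct and is essentially the paper's own argument. Both proceed by induction via the decomposition $\ast_{j=1}^N(A,\theta) = \bigl(\ast_{j=1}^{N-1}(A,\theta)\bigr)\ast_{i_1\circ\theta,\theta} A$: you take an embedding $\phi$ of the $(N-1)$-fold product, use $\phi\circ i_1$ as the compatible embedding of the last factor, and conclude with Theorem \ref{general}.
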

\begin{proof} By induction. Suppose the statement is proved for $k<N$. We write
$$\ast_{j=1}^N (A, \theta) = \left(\ast_{j=1}^{N-1} (A, \theta)\right) \ast_{i_i\circ \theta, \theta } A.$$ By the induction assumption  there is an embedding $\phi: \ast_{j=1}^{N-1} (A, \theta) \to \prod M_n/\oplus M_n$. We define $\phi_N: A \to \prod M_n/\oplus M_n$ by
$$\phi_N = \phi\circ i_1.$$ Then
$\phi_N\circ\theta = \phi\circ\i_1\circ\theta$. By Theorem \ref{general} $\ast_{j=1}^N (A, \theta)$ is MF.
\end{proof}

\begin{theorem}\label{manyFactors} Let $A_j,$ $j=1, \ldots, N$, and  $C$ be separable C*-algebras and $\theta_j: C \to A_j$, $j=1, \ldots, N$, be inclusions. Then $\ast_{j=1}^N (A_j, \theta_j)$ is MF if and only if there exist embeddings $\phi_j: A_j \to \prod M_n /\oplus M_n$, $j=1, \ldots, N$, such that $\phi_j\circ \theta_j = \phi_k\circ \theta_k$, for all $j, k\le N$.
\end{theorem}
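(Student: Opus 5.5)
The ``only if'' direction is immediate: if $\ast_{j=1}^N (A_j, \theta_j)$ embeds into $\prod M_n/\oplus M_n$ via some $\Phi$, then $\phi_j = \Phi\circ i_j$ are embeddings, because the canonical inclusions $i_j$ are injective. They satisfy $\phi_j\circ\theta_j = \phi_k\circ\theta_k$ because $i_j\circ\theta_j = i_k\circ\theta_k$ in the amalgamated free product. The work is in the ``if'' direction.

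For the ``if'' direction I would mimic the proof of Theorem \ref{general}, with Lemma \ref{manySameFactors} replacing Theorem \ref{amalgamatedProduct}. First I treat the unital case. Let
$$D = C^*(\phi_1(A_1), \ldots, \phi_N(A_N)) \subset \prod M_n/\oplus M_n.$$
Then $D$ is a separable MF C*-algebra. Let $\theta = \phi_1\circ\theta_1 = \cdots = \phi_N\circ\theta_N : C\to D$; this is an inclusion. By Lemma \ref{manySameFactors}, $\ast_{j=1}^N (D, \theta)$ is MF. It remains to show that $\ast_{j=1}^N(A_j,\theta_j)$ embeds into $\ast_{j=1}^N(D,\theta)$, and the natural candidate is the map $\ast_j (i_j\circ\phi_j)$. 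This map is well defined by the compatibility $\phi_j\circ\theta_j=\theta$.

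The main obstacle is injectivity of this map, since Theorem \ref{inclusion} is stated only for two factors. I would avoid reproving the $N$-fold version of \cite[Th. 4.11]{LP} and instead argue by induction on $N$, using the decomposition
$$\ast_{j=1}^N (A_j, \theta_j) = \left(\ast_{j=1}^{N-1} (A_j, \theta_j)\right) \ast_{i_1\circ \theta_1, \theta_N } A_N.$$
The induction hypothesis is the ``if'' statement for $N-1$ factors. Applied to $\phi_1,\ldots,\phi_{N-1}$, it shows that $P=\ast_{j=1}^{N-1}(A_j,\theta_j)$ is MF. Its proof moreover produces an explicit embedding $\psi: P \to \prod M_n/\oplus M_n$ with $\psi\circ i_j = \phi_j$, up to composing everything with a fixed embedding of the relevant MF algebra into $\prod M_n/\oplus M_n$.

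Then $\psi\circ i_1\circ\theta_1 = \phi'_1\circ \theta_1$, and I take $\phi'_N$ to be $\phi_N$ composed with the same fixed embedding. These are compatible embeddings of $P$ and $A_N$ over $C$, so Theorem \ref{general} gives that $P\ast_C A_N$ is MF.

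The delicate point is that the compatibility must survive the re-embedding. To avoid tracking this, a cleaner route is to strengthen the induction hypothesis as follows: whenever compatible embeddings $\phi_j$ into some $D\subset \prod M_n/\oplus M_n$ are given, the induced map into $\ast_{j}(D,\theta)$ is injective. This follows by iterating Theorem \ref{inclusion}, i.e. Remark \ref{howEmbeddingLooks}, together with the associativity of amalgamated free products. In particular, $\ast_{j=1}^{N}(D,\theta) = (\ast_{j=1}^{N-1}(D,\theta))\ast_C D$ contains $P\ast_C A_N$ via the two-factor embedding.

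The non-unital case then reduces to the unital one. Pass to forced unitizations $A_j^+$ and $C^+$ with the unitized maps. By Lemma \ref{unitization}, applied inductively, $\ast_j(A_j^+,\theta_j^+) = (\ast_j(A_j,\theta_j))^+$. This algebra is MF by the unital case, and therefore so is its subalgebra $\ast_j(A_j,\theta_j)$.
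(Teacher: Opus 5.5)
Your proof is correct in outline. However, the route you finally adopt is not the paper's, and one step in it needs an extra line.

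\textbf{Comparison with the paper.} The paper never proves an $N$-fold version of Theorem \ref{inclusion}. For three factors it proceeds as follows:
\begin{itemize}
\item it writes $\ast_{j=1}^3(A_j,\theta_j)=(A_1\ast_C A_2)\ast_C A_3$;
\item it embeds $A_1\ast_C A_2$ into $(\prod M_n/\oplus M_n)\ast_C(\prod M_n/\oplus M_n)$ by Remark \ref{howEmbeddingLooks};
\item it sends $A_3$ into the \emph{first} copy via $i_1\circ\phi_3$;
\item it pushes both into $\prod M_n/\oplus M_n$ with one embedding $\alpha$ from Lemma \ref{manySameFactors}, and applies Theorem \ref{general}.
\end{itemize}
That is exactly the route you sketched first and then abandoned. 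The ``delicate point'' is harmless there. Compatibility is only needed on $C$, which sits in the first factor, so routing $A_N$ through the copy containing $A_1$ and composing with the same $\alpha$ suffices. Your claim that $\psi\circ i_j=\phi_j$ ``up to a fixed embedding'' is not right for all $j$ simultaneously, but only $j=1$ matters.

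Your adopted route instead embeds the whole $N$-fold product into $\ast_{j=1}^N(D,\theta)$ and invokes Lemma \ref{manySameFactors} with $N$ factors. This is the literal $N$-fold analogue of the proof of Theorem \ref{general}. It gives a uniform induction, and an $N$-fold version of Theorem \ref{inclusion} as a by-product. Because you work with the separable $D$ rather than with $\prod M_n/\oplus M_n$, you also stay within the separability hypothesis of Lemma \ref{manySameFactors}, which the paper's argument does not literally satisfy.

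\textbf{The step to repair.} Write $\Psi_k=\ast_{j\le k}(i_j\circ\phi_j)$ and $P_D=\ast_{j=1}^{N-1}(D,\theta)$. Theorem \ref{inclusion} needs both factors embedded in one common algebra. In your inductive step, $P$ goes into $P_D$ via $\Psi_{N-1}$ while $A_N$ goes into $D$. So Theorem \ref{inclusion} does not directly give $P\ast_C A_N\hookrightarrow P_D\ast_C D$ ``via the two-factor embedding''.

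The fix is short. The map $i_1\circ\phi_N\colon A_N\to P_D$ satisfies $(i_1\circ\phi_N)\circ\theta_N=\Psi_{N-1}\circ i_1\circ\theta_1$. Hence Theorem \ref{inclusion} gives an embedding $P\ast_C A_N\hookrightarrow P_D\ast_C P_D$.

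Now let $g\colon P_D\ast_C D\to P_D\ast_C P_D$ be $i_1$ on $P_D$ and $i_2\circ i_1$ on $D$. It is well defined, since the two maps agree on $C$. The embedding above equals $g\circ\Psi_N$, so $\Psi_N$ is injective.

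Alternatively, you can skip injectivity into $\ast_{j=1}^N(D,\theta)$ altogether. The algebra $P_D\ast_C P_D\cong\ast_{j=1}^{2N-2}(D,\theta)$ is already MF by Lemma \ref{manySameFactors}.
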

\begin{proof} "Only if" is clear. We prove the "if" part by induction.
 Lemma \ref{unitization} holds for more than two factors and therefore, similar to the proof of Theorem \ref{general}, it is sufficient to consider the unital case. To avoid cumbersome notation, we will show how to prove the statement for three factors. For four or more factors it is similar.
 We have $$\ast_{j=1}^3 (A_j, \theta_j) = (A_1\ast_{\theta_1, \theta_2} A_2) \ast_{i_1\circ\theta_1, \theta_3} A_3.$$
 By Remark \ref{howEmbeddingLooks}, $(i_1\circ\phi_1)\ast(i_2\circ\phi_2): A_1\ast_{\theta_1, \theta_2} A_2 \to \left(\prod M_n/\oplus M_n\right)\ast_{\phi_1\circ\theta_1(C)} \left(\prod M_n/\oplus M_n\right)$ is an inclusion. We define $\tilde \phi_3: A_3\to  \left(\prod M_n/\oplus M_n\right)\ast_{\phi_1\circ\theta_1(C)} \left(\prod M_n/\oplus M_n\right)$ by $$\tilde\phi_3 = i_1\circ \phi_3.$$ Then
 $$\tilde \phi_3\circ\theta_3(c) = i_1(\phi_3(\theta_3(c))) = i_1(\phi_1(\theta_1(c))) = (i_1\circ\phi_1)\ast(i_2\circ\phi_2)(i_1\circ\theta_1(c)). $$
 By Lemma \ref{manySameFactors} there is an embedding $$\alpha: \left(\prod M_n/\oplus M_n\right)\ast_{\phi_1\circ\theta_1(C)} \left(\prod M_n/\oplus M_n\right) \to \prod M_n/\oplus M_n.$$ Then $\alpha\circ\tilde \phi_3$ and $\alpha\circ ((i_1\circ\phi_1)\ast(i_2\circ\phi_2))$ satisfy the necessary and sufficient condition of Theorem \ref{general}, and therefore $\ast_{j=1}^3 (A_j, \theta_j)$ is MF.
\end{proof}

\section{Amalgamated free products of amenable groups}

The next theorem shows that for amalgamated free products of amenable groups the necessary and sufficient condition of Theorem \ref{general} holds automatically.

\medskip

\begin{theorem}\label{amenable} If $G_1$ and $G_2$ are amenable groups, then $C^*(G_1\ast_H G_2)$ is MF.
\end{theorem}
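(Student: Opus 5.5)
We reduce to Theorem \ref{general}. Since $C^*(G_1\ast_H G_2) = C^*(G_1)\ast_{C^*(H)} C^*(G_2)$, we need embeddings $\phi_1: C^*(G_1)\to \prod M_n/\oplus M_n$ and $\phi_2: C^*(G_2)\to \prod M_n/\oplus M_n$ that agree on $C^*(H)$. (Throughout we take the groups countable, so that all algebras are separable; the general case follows by writing $G_1\ast_H G_2$ as a directed union of amalgamated products of countable subgroups and using that MF passes to inductive limits with injective connecting maps.) Note that $C^*(H)\subset C^*(G_i)$ is a genuine inclusion: for amenable groups the full and reduced algebras coincide, and $C^*_r(H)\subset C^*_r(G_i)$ canonically.

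\textbf{Step 1: a common target containing both algebras.} Build one amenable group containing $G_1$ and $G_2$ compatibly over $H$. Form the amalgamated free product and then pass to an amenable quotient or overgroup? That does not work directly, so we instead use a wreath-type/induced construction. Concretely, $G_1\times G_2$ is amenable, but it contains two copies of $H$ that need not coincide. Alternatively, work at the C*-level with a crossed product. Let $D=C^*(G_1)\otimes C^*(G_2)$; this is nuclear, but again the two copies of $C^*(H)$ differ.

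\textbf{Step 2: use trace/quasidiagonality uniqueness instead.} A cleaner route: by \cite{TWW}, $C^*(G_i)$ is quasidiagonal. Fix the canonical trace $\tau_i$ on $C^*(G_i)$; its restriction to $C^*(H)$ is the canonical trace $\tau_H$. By the TWW theorem (faithful amenable traces on separable exact algebras satisfying the UCT are quasidiagonal), $\tau_i$ is approximated by traces coming from finite-dimensional approximations. Therefore we obtain embeddings $\phi_i: C^*(G_i)\to \prod M_n/\oplus M_n$ with $\mathrm{tr}_\omega\circ\phi_i=\tau_i$ along a free ultrafilter. The restrictions $\phi_1|_{C^*(H)}$ and $\phi_2|_{C^*(H)}$ are then two embeddings of the nuclear algebra $C^*(H)$ inducing the same trace. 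A uniqueness theorem for such maps into the sequence algebra (approximate unitary equivalence of trace-agreeing maps from nuclear algebras into $\prod M_n/\oplus M_n$, in the spirit of Lin/Schafhauser classification of embeddings, possibly after passing to subsequences and amplifying $\phi_i\mapsto\phi_i\otimes 1_{k}$) gives a unitary $w\in\prod M_n/\oplus M_n$ with $w\phi_2(\cdot)w^*=\phi_1$ on $C^*(H)$. A reindexing/diagonal argument upgrades approximate equivalence to exact equivalence in the sequence algebra, using Kirchberg's $\epsilon$-test. Replace $\phi_2$ by $\mathrm{Ad}\,w\circ\phi_2$ and apply Theorem \ref{general}.

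\textbf{Main obstacle.} The obstacle is exactly this uniqueness step: producing an honest intertwining unitary in $\prod M_n/\oplus M_n$ (norm topology, not trace norm) between the two embeddings of $C^*(H)$. If a full uniqueness theorem is unavailable, I would first try Voiculescu-type absorption: replace each $\phi_i$ by $\phi_i\oplus\psi$, where $\psi$ is a large direct sum of finite-dimensional approximate representations of $C^*(G_1)\otimes C^*(G_2)$-type. The goal is that both restrictions to $C^*(H)$ become approximately unitarily equivalent blockwise, by the nuclearity of $C^*(H)$ and a finite-dimensional Voiculescu theorem (quasidiagonal approximate equivalence of approximately multiplicative cp maps with the same "spectral" data). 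Controlling the blockwise equivalence uniformly in $n$ is the delicate point.
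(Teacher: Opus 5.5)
You reduce correctly to Theorem \ref{general}, and Step 2 has the right architecture: embeddings of $C^*(G_i)$ with controlled traces, followed by a uniqueness theorem on $C^*(H)$. However, the uniqueness step, which you yourself flag as the main obstacle, is false for the target you chose. So there is a genuine gap.

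Two embeddings of a nuclear algebra into $\prod M_n/\oplus M_n$ (or into a norm ultraproduct of matrix algebras) with the same trace need not be unitarily equivalent. Matrix algebras have $K_0=\mathbb Z$, and the normalized trace cannot see integer-valued invariants. Two examples show this.

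\begin{itemize}
\item Let $H=\mathbb Z_2$. The nontrivial projection $p\in C^*(H)\cong\mathbb C^2$ can be sent by $\phi_1,\phi_2$ to projections in $M_{k_n}$ whose ranks differ by one for every $n$. The limiting traces agree, yet no unitary intertwines them.
\item Let $H=\mathbb Z^2$, which is torsion-free. Voiculescu's pair (cyclic shift and diagonal of roots of unity) and a genuinely commuting pair with uniformly distributed joint spectrum both induce embeddings of $C(\mathbb T^2)$ with Haar trace. Their Bott indices differ ($\pm1$ versus $0$), and the Bott index is invariant under approximate unitary equivalence.
\end{itemize}

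Neither of your repairs fixes this. Amplifying by $1_k$ and adding a common large summand $\psi$ (your Voiculescu-absorption fallback) both fail, because these invariants are additive and the discrepancy survives. There is also a separate problem: $\mathrm{tr}_\omega$ only controls coordinates in a set belonging to $\omega$, whereas the norm in $\prod M_n/\oplus M_n$ is a $\limsup$ over all $n$.

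The missing idea is to carry out the uniqueness step in a target with rational $K$-theory, where traces determine the relevant $KK$-data: the universal UHF algebra $\mathcal Q$. The paper uses \cite[Th. 2.5]{SchafhauserAmalgamated} twice.

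\begin{enumerate}
\item It obtains unital embeddings $\rho_i: C^*(G_i)\to\mathcal Q$ inducing the canonical traces.
\item Since $\rho_1\circ\theta_1$ and $\rho_2\circ\theta_2$ then induce the same faithful trace on $C^*(H)$, it obtains unitaries $u_n\in\mathcal Q$ with $u_n^*\rho_2(\theta_2(x))u_n\to\rho_1(\theta_1(x))$ for $x\in C^*(H)$.
\end{enumerate}

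In the norm ultrapower $\mathcal Q_\omega$, the maps induced by the constant sequence $(\rho_1)_n$ and by $(u_n^*\rho_2u_n)_n$ agree exactly on $C^*(H)$. Since $\mathcal Q$ is AF and MF is a local property, the separable subalgebra of $\mathcal Q_\omega$ they generate embeds into $\prod M_n/\oplus M_n$, and Theorem \ref{general} applies.

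Matrices are reintroduced only at this last step, through an arbitrary embedding, which avoids the integrality obstructions above. The Schafhauser-type uniqueness you allude to is correct, but only with $\mathcal Q$ as the target, not $\prod M_n/\oplus M_n$.
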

\begin{proof} Let $\mathcal Q$ denote the UHF-algebra $\otimes_{d=1}^{\infty} M_d$. By \cite[Th. 2.5]{SchafhauserAmalgamated} there exist unital embeddings $\rho_i: C^*(G_i)\to \mathcal Q$, $i=1, 2$, such that
$$tr_{\mathcal Q} \rho_1(g_1) = 0 = tr_{\mathcal Q} \rho_2(g_2), $$
for any $g_1\in G_1\setminus \{1\}, g_2\in G_2\setminus \{1\}$. In particular
$$tr_{\mathcal Q} \rho_1(\theta_1(h)) = 0 = tr_{\mathcal Q}\rho_2(\theta_2(h)), $$for any $h\in H\setminus\{1\}$.  By \cite[Th. 2.5]{SchafhauserAmalgamated} there exist unitaries $u_n\in \mathcal Q$ such that
$$\rho_1(\theta_1(h)) = \lim_{n\to \infty} u_n^*\rho_2(\theta_2(h))u_n, $$ for any $h\in H$. Let $\mathcal Q_{\omega}$ be the norm ultraproduct of $\mathcal Q$ and let $q: \prod \mathcal Q\to \mathcal Q_{\omega}$ be the canonical surjection. We define embeddings
$ \tilde\rho_i: C^*(G_i)\to \mathcal Q_{\omega}$, $i=1, 2$,  by
$$\tilde\rho_1= q\circ \left(\rho_1\right)_{n\in \mathbb N},$$
$$\tilde\rho_2 = q\circ \left(u_n^*\rho_2u_n\right)_{n\in \mathbb N}.$$
Then $$\tilde\rho_1\circ\theta_1(h) = \tilde\rho_2\circ \theta_2(h), $$
$h\in H$. Since MF is a local property, $\mathcal Q_{\omega}$ is an MF-algebra \cite[Prop. 3.2 (3)]{RSch}. Let $\alpha: \mathcal Q_{\omega}\to \prod M_n/\oplus M_N$ be an embedding.  Let $\phi_i= \alpha\circ \tilde\rho_i, $ $i=1, 2$. Then $\phi_1, \phi_2$ satisfy the necessary and sufficient condition of Theorem \ref{general}. Therefore  $C^*(G_1\ast_H G_2)$ is MF.
\end{proof}

\begin{remark}\label{remarkManyAmenbaleFactors}  By the same arguments and Theorem \ref{manyFactors}, Theorem \ref{amenable} generalizes to amalgamated free products of arbitrarily many amenable groups.
\end{remark}

\begin{example} Let us consider the Baumslag-Solitar group $BS(1, n) = \left< a, t| t^{-1}at=a^n\right>.$
Out of two copies of $BS(1, n)$ one can form three natural examples of free products amalgamated over an infinite cyclic group:
$$G_1= BS(1, n)\ast_{\left<t_1=t_2\right>} BS(1, n),$$
$$G_2 = BS(1, n)\ast_{\left<a_1=a_2\right>} BS(1, n),$$
$$G_3 = BS(1, n)\ast_{\left<a_1=t_2\right>} BS(1, n).$$
All three C*-algebras $C^*(G_i)$, $i=1, 2, 3$, are MF by Theorem \ref{amenable} (for $C^*(G_1)$ and $C^*(G_2)$ this follows already from Theorem \ref{amalgamatedProduct}). In fact
$C^*(G_1)$ is  RFD  \cite[Prop. 5.9]{ShulmanSkalski}.

The group $G_3$ is a building block for the famous Higman's group $H \cong G_3\ast_{\mathbb F_2} G_3.$ Higman's group has always been considered as a candidate for counterexample to the approximation conjectures. Now at least we  can see that its building blocks are MF.
\end{example}

 \medskip

Below we find some sufficient condition for amalgamated free products of groups to have RFD full group C*-algebra.

The full group C*-algebra of a MAP group might not be RFD as discovered by Bekka \cite{Bekka99}. In \cite{RFDamalg} it was proved that for a large class of amenable groups (e.g. all polycyclic-by-finite groups) free products with amalgamation over central subgroups are RFD. Here we show that amalgamated free products of amenable RF groups with amalgamation over finite subgroups are RFD.

Below, given a homomorphism $\theta: H\to G$, the induced $\ast$-homomorphism $C^*(H) \to C^*(G)$ will also be denoted by $\theta$.

\begin{proposition}\label{embIntoProdMatrices} Suppose $G_1$ and $G_2$ are amenable, $\theta_k: H\to G_k$, $k=1, 2$, are embeddings, and the corresponding amalgamated free product $G_1\ast_H G_2$ is MAP. Then there exist embeddings $j_k: C^*(G_k)\hookrightarrow \prod M_n$ such that $j_1\circ \theta_1 = j_2\circ \theta_2$.
\end{proposition}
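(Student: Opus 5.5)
Since $G = G_1\ast_H G_2$ is MAP, I will use its finite-dimensional unitary representations to build the embeddings. The compatibility on $H$ is then automatic, because $H$ sits inside $G$. Choose a separating sequence of finite-dimensional unitary representations $\sigma_m: G\to U(d_m)$; the group is countable, because we are in the separable setting. Each $\sigma_m$ extends to a $\ast$-homomorphism $\sigma_m: C^*(G)\to M_{d_m}$. Let $\iota_k: G_k\to G$ be the canonical inclusions, with $\iota_1\circ\theta_1 = \iota_2\circ\theta_2$. Define
$$j_k = \left(\sigma_m\circ \iota_k\right)_{m\in\mathbb N}: C^*(G_k)\to \prod_m M_{d_m},$$
and, after repeating and padding blocks (or passing to the block-diagonal subalgebra of $\prod M_n$), regard the target as $\prod M_n$. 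Compatibility is then immediate: $j_1\circ\theta_1 = (\sigma_m\circ\iota_1\circ\theta_1)_m = (\sigma_m\circ\iota_2\circ\theta_2)_m = j_2\circ\theta_2$.

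The substantive step is injectivity of $j_k$ on $C^*(G_k)$. The restrictions $\sigma_m\circ\iota_k$ form a family of finite-dimensional representations of $G_k$ that separates points of $G_k$, since $\iota_k$ is injective and the $\sigma_m$ separate points of $G$. What we need is the stronger statement that this family separates points of $C^*(G_k)$, i.e. that the trivial representation and, more generally, the universal representation is weakly contained in $\bigoplus_m \sigma_m\circ\iota_k$. This is where amenability enters. By amenability $C^*(G_k)=C^*_r(G_k)$, so it suffices to show that $\lambda_{G_k}$ is weakly contained in $\rho := \bigoplus_m \sigma_m\circ\iota_k$.

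For this I would enlarge the family: replace it by the closure under tensor products, conjugates and direct sums, which still consists of finite-dimensional representations factoring through $G$, since the representations $\sigma_m\circ\iota_k$ come from $G$ and we may take the family of all finite-dimensional representations of $G$. A separating family $\mathcal F$ of finite-dimensional unitary representations of $G_k$ closed under tensor products and conjugates has a key property. Its matrix coefficients form a separating $\ast$-subalgebra of functions, so by the Peter--Weyl/Bohr compactification argument they are uniformly dense in the almost periodic functions on $G_k$. Consequently $\bigoplus\mathcal F$ weakly contains every representation factoring through the Bohr compactification, in particular the trivial representation $1_{G_k}$. Fell's absorption principle then gives $\lambda\otimes(\bigoplus\mathcal F)\cong$ a multiple of $\lambda$. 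Combined with $1\prec\bigoplus\mathcal F$ and amenability ($\lambda\sim$ universal), I would try to deduce $\lambda \prec \bigoplus\mathcal F$.

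I expect exactly this last deduction to be the main obstacle. Bekka's example shows that MAP amenable groups can fail to have RFD $C^*$-algebras, so finite-dimensional representations need not weakly contain $\lambda$ in general. The fallback is therefore to not demand $\prod M_n$ separation of all of $C^*(G_k)$ directly, but to use the hypothesis more cleverly. I would first try to check whether the intended statement lets one enlarge the family with representations of $G$ itself that are genuinely infinite-dimensional but approximable. Failing that, I would use the hypothesis in the form actually needed later, combined with residual finiteness, which in Cor. 23 is where RFD of $C^*(G_k)$ comes from. Taking $\bigoplus$ with faithful RFD embeddings of $C^*(G_k)$, one must then restore compatibility on $C^*(H)$ using that representations of the finite group $H$ are determined up to conjugacy by characters, and fix the multiplicities by padding.
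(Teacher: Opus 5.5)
Your setup matches the paper's: restrict finite-dimensional representations of $G=G_1\ast_H G_2$ along the inclusions $\iota_k$, so $j_1\circ\theta_1=j_2\circ\theta_2$ holds automatically. But there is a genuine gap: the one substantive step, injectivity of $j_k$ on $C^*(G_k)$, is never proved, and the route you sketch cannot prove it. From $1_{G_k}\prec\rho$ and Fell absorption you only get $\lambda=\lambda\otimes 1\prec\lambda\otimes\rho\cong$ a multiple of $\lambda$, which is a tautology. The implication $1\prec\rho\Rightarrow\lambda\prec\rho$ is false for $G_k\neq\{e\}$ (take $\rho=1$). The choice of representations also matters. An arbitrary separating sequence does not work: for $G=\mathbb Z$, the character $n\mapsto e^{2\pi i n\alpha}$ with $\alpha$ irrational separates points of $\mathbb Z$, but it is a single point evaluation on $C^*(\mathbb Z)\cong C(\mathbb T)$. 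Your fallback does not prove the stated proposition either, since it imports residual finiteness of $G_k$ and finiteness of $H$, neither of which is assumed. Moreover, the obstacle that motivated it is illusory (see below).

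The missing idea is to use MAP in trace form, as the paper does via Hadwin--Shulman. There are finite-dimensional representations $\pi_n$ of $G$ with $\mathrm{tr}\,\pi_n(g)\to\delta_e(g)$ for all $g\in G$. To see this directly, for $g\neq e$ pick $\sigma$ with $\sigma(g)\neq 1$; then $|\mathrm{tr}\,(\sigma\oplus 1)(g)|<1$. Tensoring such representations over a finite set and taking high tensor powers then kills the normalized traces off $e$. Since $\iota_k$ is injective, the states $\mathrm{tr}\circ\pi_n\circ\iota_k$ converge to $\delta_e$ on $G_k$, hence weak$^*$ on $C^*(G_k)$ to the canonical trace $\tau$. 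By amenability $C^*(G_k)=C^*_r(G_k)$, so $\tau$ is faithful. Thus $\pi_n(\iota_k(x))=0$ for all $n$ gives $\tau(x^*x)=\lim_n\mathrm{tr}\,\pi_n(\iota_k(x^*x))=0$, so $x=0$, and $j_k=\prod_n\pi_n\circ\iota_k$ is injective. In your language, this is exactly Fell's criterion for $\lambda_{G_k}\prec\bigoplus_n\pi_n\circ\iota_k$, since $\delta_e$ is the positive-definite function of $\lambda_{G_k}$. Your Bohr-compactification remark would also suffice if applied to the regular representation of $bG$ rather than the trivial one. The coefficients $g\mapsto\mu(gU\cap U)/\mu(U)$, for shrinking neighbourhoods $U$ of $e$ in $bG$, converge pointwise to $\delta_e$ on $G$ because $G\to bG$ is injective. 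Finally, the same argument with $G_k=G$ shows that every amenable MAP group has RFD full group C*-algebra. So Bekka's example is necessarily non-amenable and is no obstruction here.
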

\begin{proof} Since $G_1\ast_H G_2$ is MAP, the delta function on it can be written as
$$\delta_e = \lim_{n\to \infty} tr\; \pi_n,$$ for some finite-dimensional representations $\pi_n$ of $G_1\ast_H G_2$ (\cite[Prop.3]{HadwinShulman}).  Then \begin{equation}\label{MAP1}\delta_e|_{i_k(G_k)} = \lim_{n\to \infty} tr\; \pi_n|_{i_k(G_k)},\end{equation}
$k =1, 2$. Since $G_k$ is amenable, the delta function on it extends to a faithful trace on $C^*(G_k)$. Therefore, (\ref{MAP1}) implies that
$$j_k:= \prod_n \pi_n|_{i_k(G_k)}: C^*(G_k) \to \prod M_n$$ is injective. We have
$$j_1\circ \theta_1(h) = \prod _n \pi_n(i_1(\theta_1(h))) = \prod _n \pi_n(i_2(\theta_2(h))) = j_2\circ \theta_2(h),$$
$h\in H$, and therefore $j_1\circ \theta_1(x) = j_2\circ \theta_2(x)$, for any $x\in C^*(H)$.
\end{proof}

\begin{corollary} If $G_1, G_2$ are amenable residually finite groups  and $H$ is a finite subgroup of them, then $C^*(G_1\ast_H G_2)$ is RFD.
 \end{corollary}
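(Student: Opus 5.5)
The plan is to combine Proposition \ref{embIntoProdMatrices} with the standard RFD analogue of the amalgamation argument for finite-dimensional amalgam. There are three ingredients.

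\emph{Step 1.} First I show that $G_1\ast_H G_2$ is residually finite, hence MAP. It is classical (Baumslag) that an amalgamated free product of two residually finite groups over a finite subgroup is residually finite. So Proposition \ref{embIntoProdMatrices} applies. It gives embeddings $j_k: C^*(G_k)\hookrightarrow \prod M_n$ with $j_1\circ\theta_1 = j_2\circ\theta_2$. Each $j_k$ is a product of finite-dimensional representations $\pi_n\circ i_k$ of $C^*(G_k)$. These representations are compatible on $C^*(H)$ because they all come from representations $\pi_n$ of the whole group $G_1\ast_H G_2$.

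\emph{Step 2.} Next I use that $C^*(H)$ is finite-dimensional, since $H$ is finite. For RFD, Exel--Loring/Armstrong--Dykema--Exel--Li type results state that $A\ast_F B$ with $F$ finite-dimensional is RFD if and only if there are separating families of finite-dimensional representations of $A$ and $B$ that can be matched on $F$, suitably. Concretely, I would aim to show directly that $C^*(G_1\ast_H G_2)$ has a separating family of finite-dimensional representations.

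The route I would try is the following. By amenability and residual finiteness, $C^*(G_k)$ is RFD. Given finite-dimensional representations $\sigma_1$ of $C^*(G_1)$ and $\sigma_2$ of $C^*(G_2)$, I take direct sums with suitable multiples of the regular representations of finite quotients. This makes the restrictions $\sigma_1|_{C^*(H)}$ and $\sigma_2|_{C^*(H)}$ unitarily equivalent: a representation of a finite-dimensional algebra is determined up to equivalence by its multiplicities. After conjugating by a unitary, the two representations agree on $C^*(H)$, and so they combine into a finite-dimensional representation of the amalgamated product.

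\emph{Step 3.} Then I show these representations separate points. Using the Exel--Loring characterization, the product is RFD if the set of finite-dimensional representations is dense, in the point-norm topology, in all representations on a separable Hilbert space. Take any representation $\pi_1\ast\pi_2$ of $C^*(G_1)\ast_{C^*(H)} C^*(G_2)$. Because $C^*(G_k)$ is amenable and RFD, each $\pi_k$ is a point-strong limit of finite-dimensional representations compressed to $P_nH$. The multiplicity-matching from Step 2, using the finite-dimensionality of $C^*(H)$, corrects these so that they agree on $C^*(H)$ and still converge. Free products of the corrected pieces then approximate $\pi_1\ast\pi_2$ pointwise-strongly. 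The Exel--Loring criterion then gives RFD.

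The main obstacle is the correction step: making finite-dimensional approximants of $\pi_1$ and $\pi_2$ agree exactly on $C^*(H)$ while preserving strong convergence. I would handle it with the Li--Shen/Brown--Dykema perturbation lemma for finite-dimensional subalgebras. Nearby representations of a finite-dimensional C*-algebra are conjugate by a unitary close to $1$, and multiplicities can be adjusted by adding a small-weight block that is fed by the regular representation of the finite quotients.
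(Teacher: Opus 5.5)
Your Step 1 matches the paper. Baumslag's theorem makes $G_1\ast_H G_2$ residually finite, hence MAP, and Proposition~\ref{embIntoProdMatrices} gives embeddings $j_k\colon C^*(G_k)\hookrightarrow\prod M_n$ with $j_1\circ\theta_1=j_2\circ\theta_2$. The paper then finishes in one line. Since $C^*(H)$ is finite-dimensional, the Li--Shen RFD theorem \cite{LiShenRFD} applies directly to $j_1,j_2$. That theorem says an amalgamated free product of separable C*-algebras over a finite-dimensional subalgebra is RFD once the factors embed into $\prod M_n$ compatibly on the amalgam. You allude to such a criterion but never use $j_1,j_2$ again. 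Instead you try to reprove the criterion by hand, and Steps 2--3 as written have genuine gaps.

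First, Step 2 is false. Take a finite quotient $G_k/N$. Its regular representation restricts on $H$ to a multiple of the quasi-regular representation $\ell^2(H/(H\cap N))$, whose character is real-valued. So adding such summands never changes the imaginary part of $\chi_{\sigma_1|_H}-\chi_{\sigma_2|_H}$. Concretely, let $G_1=G_2=\mathbb Z_3\times\mathbb Z$, $H=\mathbb Z_3\times\{0\}$, and let $\sigma_1,\sigma_2$ be the characters sending the generator of $\mathbb Z_3$ to $\omega$ and $\bar\omega$, where $\omega=e^{2\pi i/3}$. No such padding makes the restrictions equivalent. A device that does work is to pad $\sigma_k$ by $\sigma_k\otimes(\lambda_{Q_k}\ominus 1)$, for a finite quotient $Q_k$ into which $H$ embeds. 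The padded representation is $\sigma_k\otimes\lambda_{Q_k}$, which restricts on $H$ to a multiple of $\lambda_H$, and you can then match the multiples.

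Second, in Step 3 the restrictions $\sigma^{(1)}_n|_{C^*(H)}$ and $\sigma^{(2)}_n|_{C^*(H)}$ converge to $\pi_1|_{C^*(H)}=\pi_2|_{C^*(H)}$ only strongly. The ranks of their values on a minimal projection of $C^*(H)$ can differ, so they need not be norm-close at all. Hence the fact that norm-close representations of a finite-dimensional algebra are conjugate by a unitary near $1$ does not apply. A ``small-weight block'' is not a meaningful repair in the strong topology.

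What is actually needed is a strong-topology intertwining argument:
\begin{itemize}
\item build $C^*(H)$-equivariant isometries of finite-dimensional test subspaces that are close to the inclusion;
\item pad both sides far out so that the complementary multiplicities agree;
\item extend to unitary intertwiners.
\end{itemize}
That is the real content of the Li--Shen theorem. Either carry it out with the corrected padding, or simply cite \cite{LiShenRFD} after Step 1.
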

 \begin{proof}  Since $G_1, G_2$ are residually finite and $H$ is finite,  $G_1\ast_H G_2$ is RF (\cite[Th. 3]{Baumslag}), hence is MAP.  By Proposition \ref {embIntoProdMatrices} there exist embeddings $j_k: C^*(G_k)\hookrightarrow \prod M_n$ such that $j_1\circ \theta_1 = j_2\circ \theta_2$. Since $C^*(H)$ is finite-dimensional, by \cite{LiShenRFD} $C^*(G_1\ast_H G_2)$ is RFD.
 \end{proof}

\section{Central HNN-extensions and maximal tensor product with $C^*(F_n)$}

Below $\otimes$  stands for the maximal tensor product.

\begin{theorem}\label{tensorProduct} Let $N\in \mathbb N \cup \{\infty\}$. If a C*-algebra $A$ is MF, then $A\otimes C^*(F_N)$ is MF.
\end{theorem}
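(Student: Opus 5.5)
We use the lifting characterization, Theorem \ref{characterization}, with the same commutator trick as in the proof of Theorem \ref{amalgamatedProduct}. We may assume $A$ is separable and unital; the non-unital case goes through the forced unitization, since $A^+\otimes C^*(F_N)$ contains $A\otimes C^*(F_N)$. For $N=\infty$ it is enough to treat each finite $N$: the MF property is local, and $A\otimes C^*(F_\infty)$ is an inductive limit of the algebras $A\otimes C^*(F_N)$ with injective connecting maps. Note that $A\otimes C^*(F_N)$ is the universal C*-algebra generated by a copy of $A$ and $N$ unitaries commuting with $A$. Equivalently, it is the amalgamated product $\ast_{j=1}^N (A\otimes C(\mathbb T))$ over $A$. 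This suggests building asymptotic lifts of an embedding by lifting $A$ and then lifting each unitary almost commuting with the lift of $A$.

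The steps are as follows. Take a faithful representation $\pi\otimes(w_1,\dots,w_N)$ of $A\otimes C^*(F_N)$ on $H$. Here $\pi: A\to B(H)$ and the unitaries $w_j$ commute with $\pi(A)$. Pass to the doubled representation $\sigma = (\pi\oplus\pi)\otimes(w_1\oplus w_1^*, \dots, w_N\oplus w_N^*)$ on $H^{\oplus 2}$. We must check this is still faithful. It contains $\pi\otimes w$ as a direct summand, so it is faithful; this is the point of adding $w_j^*$ rather than, say, $1$, and it puts each unitary in exactly the form $\mathrm{diag}(u,-u^*)$ needed by Lemma \ref{crucial}, up to the sign, which we absorb by using $-w_j$ in the first summand. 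By Theorem \ref{characterization} and Remark \ref{contractive}, $\pi$ lifts to a contractive discrete asymptotic homomorphism $\phi_k: A\to\mathcal D$. Fix a dense sequence $a_1,a_2,\dots$ in $A$. For each $k$ and $j$, apply Lemma \ref{crucial} to $u=w_j$ and the lifts $\phi_k(a_1),\dots,\phi_k(a_k)$. This produces unitaries $V_{j,k}\in M_2(\mathcal D)$ lifting $\mathrm{diag}(w_j,-w_j^*)$ with $\|[V_{j,k},\phi_k(a_i)\oplus\phi_k(a_i)]\|\le 1/k$ for $i\le k$. Since the $\phi_k$ are contractive asymptotic homomorphisms, the same $\epsilon/3$ argument as in Theorem \ref{amalgamatedProduct} gives $\|[V_{j,k},\phi_k(a)\oplus\phi_k(a)]\|\to 0$ for every $a\in A$.

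Now define $\psi_k^{(j)}: A\otimes C(\mathbb T)\to M_2(\mathcal D)$ asymptotically. Let $\Phi_k:=\phi_k\oplus\phi_k$. The pair $(\Phi_k, V_{j,k})$ asymptotically commutes, so it induces a $\ast$-homomorphism $A\otimes C(\mathbb T)\to \prod M_2(\mathcal D)/\oplus M_2(\mathcal D)$. This uses that $A\otimes C(\mathbb T)$ is universal for commuting pairs, which holds with the maximal tensor product. A contractive continuous section, as in the proof of Lemma \ref{asHomAmalg}, turns this into a contractive discrete asymptotic homomorphism. All the $\psi^{(j)}_k$ restrict on $A$ asymptotically to the same $\Phi_k$. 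The version of Lemma \ref{asHomAmalg} for $N$ factors has the same proof: in the quotient $\prod/\oplus$ the maps glue by the universal property of the amalgamated product. It yields an asymptotic homomorphism of $A\otimes C^*(F_N)$ into $M_2(\mathcal D)$. Applying $q$, this is an asymptotic lift of $\sigma$ on the generators $A$ and $w_j$, hence on a dense set, hence everywhere. Theorem \ref{characterization}(iii), in the $M_2$ form of the Remark, then gives that $A\otimes C^*(F_N)$ is MF.

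The main obstacle is Lemma \ref{crucial}, which demands that the unitary to be lifted have the specific form $\mathrm{diag}(u,-u^*)$. We handle this by the doubling trick above, choosing the embedding $\pi\otimes w\oplus\pi\otimes(-w^*)$ and renaming so that the second-summand unitary is exactly minus the adjoint of the first. We also need $\phi_k(a)\oplus\phi_k(a)$ to lift $\pi(a)\oplus\pi(a)$, which holds because both summands carry $\pi$. Everything else is the bookkeeping with dense sequences already carried out in Theorem \ref{amalgamatedProduct}.
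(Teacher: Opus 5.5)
Your proposal is correct and is essentially the paper's proof: the same doubling $\tilde\pi=\pi\oplus\pi'$ with $\pi'(u_j)=-\pi(u_j)^*$ (your final choice), Lemma \ref{crucial} applied to $\phi_k\oplus\phi_k$ to get unitary lifts asymptotically commuting with the lift of $A$, and then Theorem \ref{characterization} in its $M_2$ form. The only difference is in assembling the asymptotic homomorphism: you glue $N$ copies of $A\otimes C(\mathbb T)$ over $A$ with an $N$-factor version of Lemma \ref{asHomAmalg} and conclude via the asymptotic-lift criterion (iii), whereas the paper directly uses the result from its reference on sections of cones that asymptotically satisfied relations give an exact discrete asymptotic lift. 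You could also skip the $C(\mathbb T)$ step by applying the universal property of $A\otimes C^*(F_N)$ directly in $\prod M_2(\mathcal D)/\oplus M_2(\mathcal D)$.
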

\begin{proof} Since MF is  a local property, we can assume that $A$ is separable. Let $\pi: A\otimes C^*(F_N) \to B(H)$ be an embedding. We define  a representation $\pi': A\otimes C^*(F_N)\to B(H)$ by
$$\pi'(a)=\pi(a), \; a\in A,$$
$$\pi'(u_i) = - \pi(u_i)^*, $$
for each generator $u_i$ of $C^*(F_N)$. Let $\tilde\pi = \pi\oplus \pi'$. Since $A$ is MF, by Theorem \ref{characterization} $\pi|_A$ lifts to a discrete asymptotic homomorphism $\phi_k: A \to \mathcal D$, $k\in \mathbb N$. Then $\tilde\pi|_A$ lifts to the discrete asymptotic homomorphism
$\tilde\phi_k = \phi_k\oplus \phi_k,$ $k\in \mathbb N$. Let $\{a_1, a_2, \ldots\}$ be a dense subset of $A$. By Lemma \ref{crucial} for each $k\in \mathbb N$ there is a lift $V_{i, k}$ of  $\tilde\pi(u_i)$, $i\le N$, such that
$$\|[V_{i, k}, \tilde\phi_k(a_l)]\|<\frac{1}{k}, $$
$l\le k$.  Thus $V_{i, k}$'s and $\tilde\phi_k(a)$'s, $a\in A$,  asymptotically satisfy the relations of $A\otimes C^*(F_N)$. By \cite[Lemma 7 + Rem. 8]{sectionsCones}, there exists a discrete asymptotic homomorphism from $A\otimes C^*(F_N)$ to $M_2(\mathcal D)$ that lifts $\tilde \pi$. By Theorem \ref{characterization}, $A\otimes C^*(F_N)$ is MF.
\end{proof}

\bigskip

Let $A$ be a unital $C^*$-algebra, $B$ and $C$ its C*-subalgebras, and $\phi: B\to C$ an isomorphism. The corresponding {\it HNN-extension}
$\langle A, t\;|\; t^{-1}Bt = C, \; \phi\rangle$ is a unital C*-algebra with the following properties:

1) There exists a unital $\ast$-homomorphism $i_A: A \to \langle A, t\;|\; t^{-1}Bt = C, \; \phi\rangle$ and a unitary $t\in \langle A, t\;|\; t^{-1}Bt = C, \; \phi\rangle$ such that
$$t^{-1}i_A(b)t= i_A(\phi(b)),$$ for any $b\in B$.

2) For any unital C*-algebra $D$ and any unitary $u\in D$ and unital $\ast$-homomorphism $\pi: A \to D$ such that $u^{-1}\pi(b)u = \pi(\phi(b))$, for any $b\in B$, there is a unique $\ast$-homomorphism $\sigma: \langle A, t\;|\; t^{-1}Bt = C, \; \phi\rangle \to D$ such that
$\sigma\circ i_A = \pi, \; \sigma(t) = u.$

\medskip

\noindent Such C*-algebra exists and is unique up to isomorphism.
An HNN-extension is {\it central} if    $B=C$ and $\phi=id$.  We will show that the MF-property passes to central HNN-extensions.

Below we identify $A$ with $i_A(A)$ to simplify notation.

\begin{theorem}\label{HNN} Let $A$ be an MF-algebra and $B$ its C*-subalgebra. Then the  HNN-extension $\langle A, t\;|\; t^{-1}Bt = B, \; id \rangle$ is MF.
\end{theorem}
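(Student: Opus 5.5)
We follow the proof of Theorem \ref{tensorProduct}, with $t$ in place of the generators $u_i$ and $B$ in place of $A$. By locality of MF we may assume $A$ is separable, and then $E=\langle A, t\;|\; t^{-1}Bt = B, \; id \rangle$ is separable as well. Let $\pi: E\to B(H)$ be an embedding. We set $u=\pi(t)$, which is a unitary commuting with $\pi(B)$. Define a second representation $\pi'$ of $E$ by $\pi'|_A=\pi|_A$ and $\pi'(t) = -\pi(t)^*$. This is well defined by the universal property of the HNN-extension, since $-u^*$ also commutes with $\pi(B)$. Then $\tilde\pi=\pi\oplus\pi': E\to B(H^{\oplus 2})$ is again an embedding, with $\tilde\pi(t) = \left(\begin{array}{cc} u &\\& -u^*\end{array}\right)$.

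Since $A$ is MF, Theorem \ref{characterization} and Remark \ref{contractive} give a contractive discrete asymptotic homomorphism $\phi_k: A\to\mathcal D$ lifting $\pi|_A$. Then $\tilde\phi_k=\phi_k\oplus\phi_k: A\to M_2(\mathcal D)$ lifts $\tilde\pi|_A$. Fix a dense sequence $\{b_1,b_2,\ldots\}$ in $B$. For each $k$ we apply Lemma \ref{crucial} to the unitary $u$, to the elements $\pi(b_1),\ldots,\pi(b_k)$ (which commute with $u$), and to their lifts $\phi_k(b_1),\ldots,\phi_k(b_k)$. This produces a unitary $V_k\in M_2(\mathcal D)$ lifting $\tilde\pi(t)$ with
$$\|[V_k, \tilde\phi_k(b_l)]\|\le \tfrac1k, \qquad l\le k.$$
The contractivity and density argument from the proof of Theorem \ref{amalgamatedProduct} then upgrades this to
$$\lim_k\|V_k^*\tilde\phi_k(b)V_k-\tilde\phi_k(b)\|=0 \quad\text{for every } b\in B.$$

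Next we assemble these data into an asymptotic homomorphism of $E$. Pass to $C_b(\mathbb N, M_2(\mathcal D))/C_0(\mathbb N, M_2(\mathcal D))$. There $(\tilde\phi_k)$ induces a $\ast$-homomorphism of $A$, and the class of $(V_k)$ is a unitary commuting with the image of $B$. By the universal property of $E$ we get a $\ast$-homomorphism $\Phi: E\to C_b(\mathbb N, M_2(\mathcal D))/C_0(\mathbb N, M_2(\mathcal D))$. Composing with the contractive continuous section of \cite[Th. 3]{sectionsCones} and evaluating at $k$, exactly as in Lemma \ref{asHomAmalg}, gives a discrete asymptotic homomorphism $\Phi_k: E\to M_2(\mathcal D)$. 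This is the same mechanism as \cite[Lemma 7 + Rem. 8]{sectionsCones} used in Theorem \ref{tensorProduct}. It satisfies $\|\Phi_k(a)-\tilde\phi_k(a)\|\to0$ for $a\in A$ and $\|\Phi_k(t)-V_k\|\to 0$. Applying $q$, $\Phi_k$ asymptotically lifts $\tilde\pi$ on $A$ and on $t$, hence on the dense $\ast$-subalgebra generated by $A$ and $t$, hence on all of $E$. Theorem \ref{characterization}(iii), in the form of the remark for $M_2(\mathcal D)$, then shows $E$ is MF.

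The main obstacle is the construction of $V_k$. We need an honest lift of $\tilde\pi(t)$ that almost commutes with the lifts of $B$, and a unitary $u$ need not lift to a unitary at all. This is why we double via $-u^*$ and invoke Lemma \ref{crucial}, which handles it. A smaller point is checking that the HNN universal property applies to $\pi'$, since $-u^*$ is unitary and commutes with $\pi(B)$. Beyond that the argument is the same bookkeeping as in Theorem \ref{amalgamatedProduct}.
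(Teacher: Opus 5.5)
Your proposal is correct and is essentially the paper's proof. It uses the same doubling $\pi'(t)=-\pi(t)^*$, the lifts $\phi_k\oplus\phi_k$, Lemma \ref{crucial} for the unitaries $V_k$, and Theorem \ref{characterization}; the only difference is that you unpack the paper's appeal to \cite[Lemma 7 + Rem. 8]{sectionsCones} into the $C_b/C_0$ universal-property-plus-section argument of Lemma \ref{asHomAmalg}. In that unpacking, $[(\tilde\phi_k(1))_k]$ need not be the unit of $C_b/C_0$, so before invoking the unital universal property you should put $1$ into $B$ (harmless, since $t$ commutes with $1$) and compress the class of $(V_k)$ by this projection.
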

\begin{proof} Similar to the proof of Th. \ref{tensorProduct}. Let $\pi: \langle A, t\;|\; t^{-1}Bt = B, \; id \rangle \to B(H)$ be an embedding. We define a representation $\pi': \langle A, t\;|\; t^{-1}Bt = B, \; id \rangle\to B(H)$ by
$$\pi'(a)=\pi(a), \;a\in A,$$
$$\pi'(t)= -\pi(t)^*.$$ Let $\tilde \pi = \pi\oplus\pi'$. Since $A$ is MF, by Theorem \ref{characterization} $\pi|_A$ lifts to a discrete asymptotic homomorphism $\phi_n: A \to \mathcal D$, $n\in \mathbb N$. Then $\tilde\pi|_A$ lifts to the discrete asymptotic homomorphism $\tilde \phi_n = \phi_n\oplus \phi_n$, $n\in \mathbb N$.  Let $\{b_1, b_2, \ldots\}$ be a dense subset of $B$. By Lemma \ref{crucial} $\tilde \pi(t)$ lifts to a unitary $V_n$ such that
$$\|[V_n , \tilde\phi_n(b_i)]\|<\frac{1}{n},$$  $i\le n$. By \cite[Lemma 7 + Rem. 8]{sectionsCones}, there exists a discrete asymptotic homomorphism from $\langle A, t\;|\; t^{-1}Bt = B, \; id \rangle$ to $M_2(\mathcal D)$ that lifts $\tilde \pi$. By Theorem \ref{characterization} $\langle A, t\;|\; t^{-1}Bt = B, \; id \rangle$ is MF.
\end{proof}


\begin{corollary} If $C^*(H)$ is MF and $G$ is a right-angled Artin group, then $C^*(H\times G)$ is MF.
 \end{corollary}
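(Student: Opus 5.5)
We argue by induction on the structure of the right-angled Artin group, using Theorem \ref{HNN} (central HNN-extensions) together with Theorem \ref{amalgamatedProduct}/Theorem \ref{general}. Let $G=A_\Gamma$ for a finite graph $\Gamma$ with vertex set $V$; we induct on $|V|$. If $V=\emptyset$, then $H\times G=H$ and there is nothing to prove. In general, fix a vertex $v$ and let $\mathrm{lk}(v)$ denote its link. The standard splitting of a RAAG gives
$$A_\Gamma = A_{\Gamma\setminus v}\ast_{A_{\mathrm{lk}(v)}}\bigl(A_{\mathrm{lk}(v)}\times\langle v\rangle\bigr),$$
which is the group HNN-extension of $A_{\Gamma\setminus v}$ with stable letter $v$ acting trivially on the subgroup $A_{\mathrm{lk}(v)}$. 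Taking the product with $H$ yields
$$H\times A_\Gamma=\bigl\langle H\times A_{\Gamma\setminus v},\, t \;\big|\; t^{-1}xt=x,\ x\in H\times A_{\mathrm{lk}(v)}\bigr\rangle .$$
Indeed, $H$ is central with respect to $t$, and the $H$-coordinates commute with everything in $A_\Gamma$.

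On the C*-level, the full group C*-algebra of a group HNN-extension is the C*-algebraic HNN-extension. Both are defined by the same universal property: a unitary representation of the group amounts to a representation of the base group together with a unitary $u$ commuting with the image of the associated subgroup, and this is exactly the data described in property 2) of the definition. Setting $A=C^*(H\times A_{\Gamma\setminus v})$ and $B=C^*(H\times A_{\mathrm{lk}(v)})\subset A$, we obtain
$$C^*(H\times A_\Gamma)=\langle A,t\mid t^{-1}Bt=B,\ id\rangle .$$
The inclusion $B\subset A$ is injective. This follows because $H\times A_{\mathrm{lk}(v)}$ is a retract of $H\times A_{\Gamma\setminus v}$: one kills the generators outside the link. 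Retracts induce injective maps on full group C*-algebras.

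By the induction hypothesis, $A=C^*(H\times A_{\Gamma\setminus v})$ is MF. Theorem \ref{HNN} then gives that $C^*(H\times A_\Gamma)$ is MF. The base case already holds by assumption, and the case of a single vertex, $H\times\mathbb Z$, is simply the special case $B=A$, which is also covered by Theorem \ref{tensorProduct} with $N=1$. If infinitely generated RAAGs are allowed, we pass to the inductive limit: MF is a local property, so a directed union of MF subalgebras is MF, and finite subgraphs give C*-subalgebras via the same retraction argument.

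The main obstacle is the identification of the full C*-algebra of the group HNN-extension with the C*-HNN-extension over the full subalgebra $B$. In particular, we must check that $C^*(H\times A_{\mathrm{lk}(v)})\to C^*(H\times A_{\Gamma\setminus v})$ is injective, and we handle this with the retraction argument above. Everything else is formal bookkeeping with universal properties.
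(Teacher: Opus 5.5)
Your proof is correct and follows essentially the same argument as the paper: induct on the number of vertices, write $C^*(H\times G)$ as the central HNN-extension of $C^*(H\times G_1)$ (where $G_1$ is the RAAG of $\Gamma$ with a vertex $v$ removed) over the subalgebra generated by $C^*(H)$ and the generators adjacent to $v$, and apply Theorem \ref{HNN}. The differences are cosmetic: the paper simply defines $B$ as the subalgebra generated inside $A$, so your retraction argument for injectivity is valid but not needed, and the Theorems \ref{amalgamatedProduct}/\ref{general} you cite in your first sentence are never actually used.
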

\begin{proof} Induction on the number $k$ of vertices of the defining graph $\Gamma$ of $G$. If $k=1$, $C^*(G) = C^*(\mathbb Z)$  and the statement holds by Theorem \ref{tensorProduct}.
Suppose it is proved for $k\le n-1$. Let $v$ be one of the vertices of $\Gamma$, let $\Gamma_1$ be the graph obtained from $\Gamma$ by removing $v$ and all the edges coming from $v$, and let $G_1$ be the right-angled Artin group defined by $\Gamma_1$. Then
$$C^*(H\times G) = \langle C^*(H\times G_1), t\;|\; t^{-1}Bt = B, \; id \rangle,$$ where $B$ is the C*-subalgebra of $C^*(H\times G_1)$ generated by
$C^*(H)$ and $\{w\in V(\Gamma_1)\;|\; [w, v]=0\}$. Theorem \ref{HNN} finishes the proof.
\end{proof}

\begin{remark} It is known that if $G$ is a right-angled Artin group, then $C^*(G)$ is not only MF, but is even quasidiagonal \cite[Th. 4.8]{Atkinson}.
\end{remark}

 \section{Crossed products}

 Let $A$ be a unital C*-algebra and $G$  a group that acts on $A$ by automorphisms. 
 We will identify $A$ and $G$ with their canonical images in $A\rtimes G$.
 Given two groups $G$ and $H$ with a common subgroup $F$, we will identify $G$ and $H$ with their canonical images in $G\ast_F H$.

 \begin{proposition}\label{crossedProduct} $A \rtimes (G\ast_F H) \cong (A\rtimes G)\ast_{A\rtimes F}(A\rtimes H)$.
 \end{proposition}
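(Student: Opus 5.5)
The plan is to prove the isomorphism by showing that both sides have the same universal property; here the crossed products are the full ones. For a unital C*-algebra $\mathcal E$, a unital $\ast$-homomorphism $A\rtimes K\to\mathcal E$, for any group $K$ acting on $A$, is the same thing as a covariant pair $(\pi,v)$. Here $\pi: A\to\mathcal E$ is a unital $\ast$-homomorphism and $v: K\to\mathcal U(\mathcal E)$ is a homomorphism with $v(k)\pi(a)v(k)^*=\pi(k\cdot a)$.

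\textbf{Step 1: the canonical maps.} The inclusions $G\hookrightarrow G\ast_F H$ and $H\hookrightarrow G\ast_F H$ give $\ast$-homomorphisms
$$\iota_G: A\rtimes G\to A\rtimes(G\ast_F H),\qquad \iota_H: A\rtimes H\to A\rtimes(G\ast_F H).$$
These maps agree on $A\rtimes F$, because on $A$ they are both the identity and on $F$ the two images coincide. They are injective, and the justification I would give is a standard conditional-expectation argument: for full crossed products, restriction to a subgroup is injective, using induced covariant representations. We may also simply regard $A\rtimes F$ through its images, as the paper does. By the universal property of the amalgamated free product, $\iota_G$ and $\iota_H$ induce a $\ast$-homomorphism
$$\Psi: (A\rtimes G)\ast_{A\rtimes F}(A\rtimes H)\to A\rtimes(G\ast_F H).$$

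\textbf{Step 2: the inverse map.} Let $D=(A\rtimes G)\ast_{A\rtimes F}(A\rtimes H)$, with canonical maps $i_1,i_2$. Define $\pi=i_1|_A=i_2|_A$; these agree because $A\subset A\rtimes F$. Define unitaries $v_g=i_1(g)$ and $v_h=i_2(h)$. For $f\in F$ we have $i_1(f)=i_2(f)$, so by the universal property of the group amalgam $G\ast_F H$ these assignments extend to a homomorphism $v: G\ast_F H\to\mathcal U(D)$.

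The covariance relation holds on the generators $g$ and $h$, since it already holds inside $A\rtimes G$ and $A\rtimes H$. It therefore holds on all of $G\ast_F H$, because covariance is preserved under products and inverses: if $v(x)$ and $v(y)$ implement the actions of $x$ and $y$, then $v(xy)$ implements the action of $xy$. The pair $(\pi,v)$ thus gives a $\ast$-homomorphism
$$\Phi: A\rtimes(G\ast_F H)\to D.$$

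\textbf{Step 3: checking the two maps are inverse.} $\Psi\circ\Phi$ fixes $A$ and every element of $G\ast_F H$, which generate $A\rtimes(G\ast_F H)$, so it is the identity. $\Phi\circ\Psi$ fixes $i_1(A\rtimes G)$ and $i_2(A\rtimes H)$, since those are generated by $A$, $G$ and $A$, $H$ respectively, and these images generate $D$. Hence $\Phi\circ\Psi$ is also the identity.

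The main obstacle is the well-definedness issue in Step 1: one must make sure that $A\rtimes F$ embeds compatibly in both $A\rtimes G$ and $A\rtimes H$, so that the amalgamated free product is formed over a genuine common subalgebra. I would handle this by citing injectivity of $A\rtimes F\to A\rtimes G$ for full crossed products by subgroups. Everything else is formal manipulation of universal properties.
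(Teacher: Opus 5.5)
Your proposal is correct and follows essentially the same route as the paper. The paper also builds $\alpha$ (your $\Phi$) from the covariant pair $\bigl(i_1|_A,\ g_1h_1\cdots g_Nh_N\mapsto i_1(g_1)i_2(h_1)\cdots i_1(g_N)i_2(h_N)\bigr)$, builds $\beta$ (your $\Psi$) from the universal property of the amalgamated free product, and checks on generators that the two maps are mutually inverse. Two points the paper passes over silently are made explicit in your version: you define $v$ through the universal property of $G\ast_F H$, and you note that $A\rtimes F$ embeds into $A\rtimes G$ and $A\rtimes H$ for full crossed products.
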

 \begin{proof} Let $$i_1: A\rtimes G \to (A\rtimes G)\ast_{A\rtimes F}(A\rtimes H), $$
 $$i_2: A\rtimes H \to (A\rtimes G)\ast_{A\rtimes F}(A\rtimes H)$$
 be the canonical inclusions. We define $\alpha_A: A\to (A\rtimes G)\ast_{A\rtimes F}(A\rtimes H)$ by
 $$\alpha_A(a) = i_1(a) \equiv i_2(a),$$ $a\in A$. We define $\alpha_{G\ast_F H}: G\ast_F H \to (A\rtimes G)\ast_{A\rtimes F}(A\rtimes H)$ by
 $$\alpha_{G\ast_F H}(g_1h_1\ldots g_Nh_N) = i_1(g_1)i_2(h_1)\ldots i_1(g_N)i_2(h_N),$$
 $g_1, \ldots, g_N\in G, h_1, \ldots h_N\in H, N\in \mathbb N$. Since the amalgamated C*-subalgebra in $(A\rtimes G)\ast_{A\rtimes F}(A\rtimes H)$ contains both $A$ and $F$, both $\alpha_A$ and $\alpha_{G\ast_F H}$ are well-defined.  We have
 \begin{multline*}\alpha_{G\ast_F H}(g_1h_1\ldots g_Nh_N) \alpha_A(a)\alpha_{G\ast_F H}(g_1h_1\ldots g_Nh_N)^{-1} \\ = i_1(g_1)i_2(h_1)\ldots i_1(g_N)i_2(h_N)i_2(a)i_2(h_N^{-1})i_1(g_N^{-1})\ldots i_2(h_1^{-1})i_1(g_1^{-1}) \\ = i_1(g_1)i_2(h_1)\ldots i_1(g_N)i_1(h_Na)i_1(g_N^{-1})\ldots i_2(h_1^{-1})i_1(g_1^{-1}) \\ = \ldots = i_1(g_1h_1\ldots g_Nh_n a) = \alpha_A(g_1h_1\ldots g_Nh_N a).
 \end{multline*}
 Thus  $(\alpha_A, \alpha_{G\ast_F H})$ is a covariant pair and therefore defines a $\ast$-homomorphism $$\alpha: A \rtimes (G\ast_F H) \to (A\rtimes G)\ast_{A\rtimes F}(A\rtimes H)$$ such that $$\alpha|_A = \alpha_A, \; \alpha|_{G\ast_F H} = \alpha_{G\ast_F H}.$$

 We define $$\beta: (A\rtimes G)\ast_{A\rtimes F}(A\rtimes H) \to A \rtimes (G\ast_F H)$$ by
 $$\beta(i_1(a)) = a,$$
 $$\beta(i_1(g)) = g,$$
 $$\beta(i_2(a)) = a,$$
 $$\beta(i_2(h)) = h,$$
 $a\in A, g\in G, h\in H$. Since $\beta(i_1(a)) = \beta(i_2(a))$ and $\beta(i_1(f)) = \beta(i_2(f))$, $\beta$ is a well-defined $\ast$-homomorphism. It is straightforward to check that $\alpha\circ \beta = id$ and $\beta\circ \alpha = id$.

 \end{proof}

 \begin{remark} \label{crossedProductManyFactors} In the proposition above the amalgamated free product of two factors can be replaced by amalgamated free product of more  than two factors.
 \end{remark}

 \begin{corollary} $C^*(\mathbb Z^2\rtimes SL_2(\mathbb Z))$ is MF.
 \end{corollary}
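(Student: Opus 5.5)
The plan is to combine Proposition \ref{crossedProduct} with the classical amalgamated decomposition of $SL_2(\mathbb Z)$ and then apply Theorem \ref{amenable}. Recall that
$$SL_2(\mathbb Z)\cong \mathbb Z_4\ast_{\mathbb Z_2}\mathbb Z_6,$$
where $\mathbb Z_4$ is generated by $\left(\begin{array}{cc}0&-1\\1&0\end{array}\right)$, $\mathbb Z_6$ is generated by $\left(\begin{array}{cc}0&-1\\1&1\end{array}\right)$, and the common subgroup is the center $\{\pm 1\}\cong\mathbb Z_2$.

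Take $A = C^*(\mathbb Z^2)=C(\mathbb T^2)$ with the action of $SL_2(\mathbb Z)$ induced from the action on $\mathbb Z^2$. Then $C^*(\mathbb Z^2\rtimes SL_2(\mathbb Z)) = A\rtimes SL_2(\mathbb Z)$, since the full group C*-algebra of a semidirect product is the full crossed product. Proposition \ref{crossedProduct} then gives
$$C^*(\mathbb Z^2\rtimes SL_2(\mathbb Z))\cong (A\rtimes\mathbb Z_4)\ast_{A\rtimes \mathbb Z_2}(A\rtimes \mathbb Z_6) = C^*(\mathbb Z^2\rtimes \mathbb Z_4)\ast_{C^*(\mathbb Z^2\rtimes\mathbb Z_2)} C^*(\mathbb Z^2\rtimes \mathbb Z_6).$$
The same identification shows this equals $C^*(G_1\ast_H G_2)$ with $G_1=\mathbb Z^2\rtimes\mathbb Z_4$, $G_2=\mathbb Z^2\rtimes \mathbb Z_6$ and $H=\mathbb Z^2\rtimes\mathbb Z_2$. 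Equivalently, one checks directly the group isomorphism $\mathbb Z^2\rtimes(\mathbb Z_4\ast_{\mathbb Z_2}\mathbb Z_6)\cong G_1\ast_H G_2$ and uses $C^*(G_1\ast_H G_2)=C^*(G_1)\ast_{C^*(H)}C^*(G_2)$.

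The groups $G_1$ and $G_2$ are virtually abelian, since each is an extension of $\mathbb Z^2$ by a finite group, so they are amenable. Theorem \ref{amenable} then gives that $C^*(G_1\ast_H G_2)$ is MF, which proves the corollary.

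The only real point to verify is that the embeddings of $A\rtimes\mathbb Z_2$ into $A\rtimes\mathbb Z_4$ and $A\rtimes \mathbb Z_6$ are injective and compatible, so that the amalgamated free product is genuinely over the common subalgebra. This follows because $H$ is a subgroup of the amenable groups $G_i$: full and reduced crossed products coincide, and the inclusion of a subgroup induces an injective map on reduced group C*-algebras. It also fits the conventions of Proposition \ref{crossedProduct}, where $F=\mathbb Z_2$ acts on $A$ by restriction.
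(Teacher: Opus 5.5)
Your proposal is correct and follows essentially the same route as the paper: decompose $SL_2(\mathbb Z)\cong\mathbb Z_4\ast_{\mathbb Z_2}\mathbb Z_6$, use Proposition \ref{crossedProduct} to write $\mathbb Z^2\rtimes SL_2(\mathbb Z)$ as an amalgamated free product of the amenable groups $\mathbb Z^2\rtimes\mathbb Z_4$ and $\mathbb Z^2\rtimes\mathbb Z_6$ over $\mathbb Z^2\rtimes\mathbb Z_2$, and then apply Theorem \ref{amenable}. Your added check that the inclusions of the amalgamated subalgebra are injective is a harmless sanity check that the paper leaves implicit.
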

 \begin{proof} $SL_2(\mathbb Z) \cong \mathbb Z_4\ast_{\mathbb Z_2} \mathbb Z_6$ (\cite{SerreTrees}). Therefore by Proposition \ref{crossedProduct}
 $$\mathbb Z^2\rtimes SL_2(\mathbb Z) \cong \left(\mathbb Z^2\rtimes \mathbb Z_4\right)\ast_{\mathbb Z^2\rtimes \mathbb Z_2} \left(\mathbb Z^2\rtimes \mathbb Z_6\right).$$ Since $Z^2\rtimes \mathbb Z_4$ and $\mathbb Z^2\rtimes \mathbb Z_6$ are amenable, by Theorem \ref{amenable} $C^*(\mathbb Z^2\rtimes SL_2(\mathbb Z))$ is MF.
 \end{proof}


 \begin{corollary} For any semidirect product $G\rtimes  \mathbb F_n$ of an amenable group  $G$ by a free group $ \mathbb F_n$,  $C^*(G\rtimes \mathbb F_n)$ is MF.
  \end{corollary}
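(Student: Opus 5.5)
We prove the corollary by writing $\mathbb F_n$ as an amalgamated free product of amenable groups, applying Proposition \ref{crossedProduct} (with Remark \ref{crossedProductManyFactors}), and then invoking Theorem \ref{amenable} and Remark \ref{remarkManyAmenbaleFactors}. The model case is $\mathbb Z^2\rtimes SL_2(\mathbb Z)$ in the previous corollary.

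The natural decomposition is $\mathbb F_n = \mathbb Z\ast\cdots\ast\mathbb Z$, an amalgamated free product over the trivial subgroup $F=\{e\}$ with $n$ factors $\langle t_1\rangle,\dots,\langle t_n\rangle$. Let $\mathbb F_n$ act on $G$, and hence on $A=C^*(G)$ by automorphisms. Using the standard identification $C^*(G\rtimes K)\cong C^*(G)\rtimes K$ for full crossed products, Proposition \ref{crossedProduct} and Remark \ref{crossedProductManyFactors} give
$$C^*(G\rtimes\mathbb F_n)\cong C^*(G)\rtimes\mathbb F_n\cong \ast_{C^*(G)}\bigl(C^*(G)\rtimes\langle t_j\rangle\bigr)_{j=1}^n\cong C^*\bigl(\ast_{G}(G\rtimes\langle t_j\rangle)_{j=1}^n\bigr),$$
where the amalgamation is over $C^*(G)=C^*(G)\rtimes\{e\}$. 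Equivalently, at the group level,
$$G\rtimes\mathbb F_n\cong (G\rtimes\mathbb Z)\ast_G(G\rtimes\mathbb Z)\ast_G\cdots\ast_G(G\rtimes\mathbb Z),$$
with the $j$-th copy of $\mathbb Z$ acting through $t_j$. One could also verify this directly from the universal properties.

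Each factor $G\rtimes_{t_j}\mathbb Z$ is amenable, being an extension of the amenable group $G$ by $\mathbb Z$. So the group is an amalgamated free product of finitely many amenable groups over the common subgroup $G$. Theorem \ref{amenable}, in the many-factor form of Remark \ref{remarkManyAmenbaleFactors} (which rests on Theorem \ref{manyFactors}), then gives that $C^*(G\rtimes\mathbb F_n)$ is MF.

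The main points to check are the following.
\begin{enumerate}
\item Proposition \ref{crossedProduct} was stated for an amalgamated product of groups acting on a unital algebra. Here the amalgamated subgroup is trivial, and the actions of the factors agree on it trivially, so the hypotheses hold.
\item Theorem \ref{amenable} relies on \cite[Th. 2.5]{SchafhauserAmalgamated}, which does not obviously need $G$ to be countable. If separability is needed (for instance in Theorem \ref{manyFactors}), reduce to countable $G$: MF is a local property, and every finitely generated subgroup of $G\rtimes\mathbb F_n$ lies in $G_0\rtimes\mathbb F_n$ for some countable $\mathbb F_n$-invariant subgroup $G_0\le G$. The full C*-algebra of such a subgroup embeds into $C^*(G\rtimes\mathbb F_n)$ (amenable-by-free inclusions; alternatively use inductive limits), and an inductive limit of MF algebras is MF.
\item For $n=\infty$ the same argument applies to each finite $n$, and one passes to the limit using locality of MF.
\end{enumerate}
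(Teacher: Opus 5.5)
Your proof is correct and follows the same route as the paper. Both write $\mathbb F_n$ as a free product of copies of $\mathbb Z$, use Proposition~\ref{crossedProduct} with Remark~\ref{crossedProductManyFactors} to present $G\rtimes\mathbb F_n$ as an amalgamated free product of the amenable groups $G\rtimes\mathbb Z$ over $G$, and conclude by Theorem~\ref{amenable} and Remark~\ref{remarkManyAmenbaleFactors}; your extra reduction to countable $G$ and to finite $n$ is care the paper leaves implicit, and it is valid because $C^*$ of a subgroup of a discrete group always embeds in $C^*$ of the group and MF passes to closures of increasing unions of MF subalgebras.
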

  \begin{proof} By Proposition \ref{crossedProduct} and Remark \ref{crossedProductManyFactors},
  $G\rtimes \mathbb F_n$  is a free product of $G\rtimes \mathbb Z$ amalgamated over $G$.   Since $G\rtimes \mathbb Z$ is amenable, the statement follows from Theorem \ref{amenable} and Remark \ref{remarkManyAmenbaleFactors}.
  \end{proof}

\end{document}